\documentclass[11pt,
%draft
]{amsart}
\usepackage{%amsmath,
amssymb, graphicx
%,latexsym%,amsthm
}
\usepackage{mathrsfs} 
\usepackage{amsthm}
\usepackage{graphicx,color}

%\usepackage[notref,notcite]{showkeys}
%
%%fakesection numbering %%%%%%%%%
\newtheorem{theorem}{Theorem}
\newtheorem{lemma}{Lemma}
\newtheorem{proposition}{Proposition}

\newtheorem{remark}{Remark}
\theoremstyle{remark}

\date{\today}

%%%%%%%%%%%%%%%%%%%%%%%%%%%%%%%%%%%%%%%%%%%%%%%%%%%%%%%%%%%%%%%%%%%%%%%%%%%%%%%%%%%%%%%%%%%%%%%%
\setlength{\oddsidemargin}{0.0in}
\setlength{\evensidemargin}{0.0in}
\setlength{\textwidth}{6.5in}
\setlength{\topmargin}{0.0in}
\setlength{\textheight}{8.5in}
%%%%%%%%%%%%%%%%%%%%%%%%%%%%%%%%%%%%%%%%%%%%%%%%%%%%%%%%%%%%%%%%%%%%%%%%%%%%%%%%%%%%%%%%%%%%%%%%

\setlength{\marginparwidth}{0.8in}

\title[On an inverse boundary value problem for a nonlinear elastic wave equation]{On an inverse boundary value problem for a nonlinear elastic wave equation}
  \author[G. Uhlmann]{Gunther Uhlmann}
\address{Department of Mathematics, University of Washington, Seattle, WA 98195, USA; Institute for Advanced Study, 
The Hong Kong University of Science and Technology, Kowloon, Hong Kong, China (\tt{gunther@math.washington.edu})
}
  
  \author[J. Zhai]{Jian Zhai}
\address{Institute for Advanced Study,
  The Hong Kong University of Science and Technology, Kowloon, Hong Kong, China
  (\tt{jian.zhai@outlook.com}).}
 
\begin{document}
\begin{abstract}
We consider an inverse boundary value problem for a nonlinear elastic wave equation which was studied in \cite{de2018nonlinear}. We show that all the parameters appearing in the equation can be uniquely determined from boundary measurements under certain geometric assumptions. The proof is based on second order linearization and Gaussian beams.
\end{abstract}
\keywords{elastic waves, inverse boundary value problem, quasilinear equation, Gaussian beam}
\maketitle

\section{Introduction}

Consider the initial boundary value problem for the quasilinear elastic wave equation
\begin{equation}\label{elastic_eq}
\begin{split}
&\rho\frac{\partial^2u}{\partial t^2}-\nabla\cdot S(x,u)=0,\quad (t,x)\in (0,T)\times\Omega,\\
&u(t,x)=f(t,x),\quad (t,x)\in (0,T)\times\partial \Omega,\\
&u(0,x)=\frac{\partial}{\partial t}u(0,x)=0,\quad x\in \Omega.
\end{split}
\end{equation}
 Here $\Omega\subset\mathbb{R}^3$ is a bounded domain with smooth boundary $\partial\Omega$. We denote $x=(x_1,x_2,x_3)$ to be the Cartesian coordinates. Then we can write the displacement vector as $u=(u_1,u_2,u_3)$ under the Cartesian coordinates. 
The stress tensor $S$ has the form
\begin{equation}
\begin{split}
S_{ij}=&\lambda \varepsilon_{mm}\delta_{ij}+\lambda\widetilde{\varepsilon}_{mm}\frac{\partial u_i}{\partial x_j}+2\mu\left(\varepsilon_{ij}+\widetilde{\varepsilon}_{jn}\frac{\partial u_i}{\partial x_n}\right)\\
&+ \mathscr{A}\widetilde{\varepsilon}_{in}\widetilde{\varepsilon}_{jn}+\mathscr{B}(2\widetilde{\varepsilon}_{nn}\widetilde{\varepsilon}_{ij}+\widetilde{\varepsilon}_{mn}\widetilde{\varepsilon}_{mn}\delta_{ij})+\mathscr{C}\widetilde{\varepsilon}_{mm}\widetilde{\varepsilon}_{nn}\delta_{ij}+\mathcal{O}(u^3),
\end{split}
\end{equation}
where $\varepsilon$ is the strain tensor defined as
\[
\varepsilon_{ij}(u)=\frac{1}{2}\left(\frac{\partial u_i}{\partial x_j}+\frac{\partial u_j}{\partial x_i}+\frac{\partial u_k}{\partial x_i}\frac{\partial u_k}{\partial x_j}\right),
\]
and $\widetilde{\varepsilon}$ is the linearized strain tensor
\[
\widetilde{\varepsilon}_{ij}(u)=\frac{1}{2}\left(\frac{\partial u_i}{\partial x_j}+\frac{\partial u_j}{\partial x_i}\right).
\]
By using the notation $\mathcal{O}(u^3)$ we are considering the small displacement asymptotics. The functions 
\[
\lambda(x),\mu(x),\rho(x),\mathscr{A}(x),\mathscr{B}(x),\mathscr{C}(x)
\]
 are all smooth on $\overline{\Omega}$. The parameters $\lambda$ and $\mu$ are called Lam\'e moduli and $\rho$ is the density. This model is widely used and can be found in \cite{landau1960theory,de2003finite,de2018nonlinear}.\\

In this article, we study the inverse problem of recovering the elastic parameters $\lambda,\,\mu,\,\rho,\,\mathscr{A},\,\mathscr{B},\,\mathscr{C}$ from \textit{displacement-to-traction map}
\[
\Lambda:f\rightarrow \nu\cdot S(x,u)\vert_{(0,T)\times\partial\Omega},
\]
where $\nu$ is the exterior normal unit vector to $\partial \Omega$.

The well-definedness of $\Lambda$ for small $f$ is guaranteed by the well-posedness of \eqref{elastic_eq} with small boundary data:
\begin{proposition}[\text{\cite[Theorem 2]{de2018nonlinear}}] 
Assume $f \in C^m([0,T]\times\partial\Omega), m\geq 3$ is supported away from $t=0$. Then there exists $\epsilon_0>0$ such that for $\|f\|_{C^m}<\epsilon_0$ there exists a unique solution
\[
u\in \bigcap_{k=0}^m C^k([0,T];\, W^{m-k,2})([0,T]\times\Omega)).
\]
\end{proposition}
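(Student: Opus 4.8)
The plan is to treat \eqref{elastic_eq} as a small quasilinear perturbation of the linear elastic wave system and to construct the solution by a Picard-type iteration whose convergence is forced by high-order energy estimates together with the smallness of $f$. As a preliminary reduction, I would extend the boundary data to a function $F\in\bigcap_{k=0}^m C^k([0,T];W^{m-k,2}(\Omega))$ with $F|_{(0,T)\times\partial\Omega}=f$, $\|F\|\lesssim\|f\|_{C^m}$, and $F$ supported away from $t=0$; setting $w=u-F$ turns \eqref{elastic_eq} into a quasilinear problem for $w$ with homogeneous Dirichlet data, vanishing Cauchy data, and a forcing term $G$ that depends smoothly on $w,F$ and their first and second derivatives and vanishes identically when $F\equiv0$. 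Because $f$ (hence $F$) vanishes near $t=0$ while the initial data is zero, all compatibility conditions at $t=0$ hold to every order, which is exactly what is needed for the solution to lie in the spaces $C^k([0,T];W^{m-k,2})$ up to the initial time.

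Next I would write the principal part in quasilinear divergence form
\[
\rho\,\partial_t^2 u_i-\partial_j\big(A_{ij}^{kl}(\nabla u)\,\partial_l u_k\big)=0,
\]
where $A_{ij}^{kl}$ depends smoothly on its argument and $A_{ij}^{kl}(0)$ is the classical elasticity tensor determined by $\lambda,\mu$. Under the physically natural strong convexity of the Lam\'e moduli the quadratic form associated with $A(0)$ is coercive on symmetric matrices; hence for $\nabla u$ small in $L^\infty$ the frozen-coefficient operator stays hyperbolic and the natural elastic energy remains equivalent to the $H^1$ norm. The iteration is then defined by solving, for a given previous iterate $w^{(n)}$, the linear variable-coefficient Dirichlet problem with coefficients $A(\nabla(w^{(n)}+F))$, forcing $G(w^{(n)},F)$, and zero boundary and initial data; standard linear hyperbolic theory produces a unique solution $w^{(n+1)}$ in the class $\bigcap_{k=0}^m C^k([0,T];W^{m-k,2})$.

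The heart of the argument is a family of energy estimates, uniform in $n$, obtained by differentiating the linearized equation up to order $m$ in space and time, pairing with the time derivative of the differentiated unknown, and integrating by parts. The homogeneous Dirichlet condition annihilates the boundary terms, the coercivity of $A$ controls the elastic energy from below, and the commutators generated by moving derivatives past the variable coefficients are absorbed using the algebra property of $W^{m-1,2}$ in dimension three, which holds precisely because $m\ge3$ gives $W^{m-1,2}\hookrightarrow L^\infty$ and keeps $A(\nabla w)$ in the same space. A Gr\"onwall argument then shows that, provided $\|f\|_{C^m}<\epsilon_0$ with $\epsilon_0$ small, the iteration maps a fixed small ball of $\bigcap_{k=0}^m C^k([0,T];W^{m-k,2})$ into itself and is a contraction in the lower-order energy norm $C^0([0,T];W^{1,2})\cap C^1([0,T];L^2)$; the limit $u=w+F$ then solves \eqref{elastic_eq}, and uniqueness follows by applying the same estimate to the difference of two solutions.

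I expect the main obstacle to be closing the high-order estimate uniformly on the \emph{entire} interval $[0,T]$ rather than on a short time depending on the data: one must verify that the smallness of $f$ propagates, i.e.\ that the high Sobolev norm of $\nabla u$ stays small enough on all of $[0,T]$ to keep $A(\nabla u)$ coercive and the Gr\"onwall constants controlled, which fixes the choice of $\epsilon_0$. The other delicate point is the treatment of the boundary contributions in the differentiated estimates, where normal derivatives near $\partial\Omega$ cannot be controlled directly; this is precisely what the reduction to homogeneous Dirichlet data above is designed to avoid.
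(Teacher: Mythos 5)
You should note at the outset that the paper does not prove this proposition at all: it is quoted verbatim as Theorem~2 of the cited reference \cite{de2018nonlinear}, and the present article uses it as a black box to guarantee that the displacement-to-traction map $\Lambda$ is well defined for small data. So there is no internal proof to compare against; what can be said is that your outline (extension of the boundary data, reduction to homogeneous Dirichlet conditions with all compatibility conditions satisfied because $f$ vanishes near $t=0$, Picard iteration on linearized problems, high-order energy estimates uniform in the iteration, contraction in a low norm, smallness of $\epsilon_0$ to keep coercivity on all of $[0,T]$) is the standard route for quasilinear hyperbolic initial boundary value problems and is the same general strategy as in the cited source.

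There is, however, one genuine gap in the mechanism of your high-order estimate. You propose to differentiate the linearized equation ``up to order $m$ in space and time,'' pair with the time derivative of the differentiated unknown, and claim that ``the homogeneous Dirichlet condition annihilates the boundary terms.'' This is false for spatial derivatives: if $w$ vanishes on $(0,T)\times\partial\Omega$, its tangential and time derivatives vanish there, but $\partial_{x_j}w$ does not, so integrating by parts in the spatially differentiated equation produces boundary terms you cannot discard. The reduction to homogeneous data does not repair this, contrary to what your last paragraph suggests. The standard fix, which your argument needs, is to apply \emph{only} time derivatives $\partial_t^k$, $k\le m$ (these do preserve the homogeneous Dirichlet condition), close the Gr\"onwall estimate for $\partial_t^k w$ in the energy norms, and then recover the missing spatial regularity at each fixed time by viewing the equation as a second-order elliptic system for $w$ with right-hand side $\rho\,\partial_t^2 w + \cdots$ and invoking elliptic regularity up to the boundary; iterating this in $k$ yields $w\in\bigcap_{k=0}^m C^k([0,T];W^{m-k,2})$. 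With that substitution your scheme is sound; the remaining ingredients (the algebra property of $W^{m-1,2}$ in dimension three for $m\ge 3$, coercivity of the frozen elasticity tensor for $\nabla u$ small in $L^\infty$, and the bootstrap that propagates smallness on all of $[0,T]$) are used correctly.
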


Denote $S=S^L+S^N$, where $S^L$ is the linearized stress
\[
S^L_{ij}(x,u)=\lambda \widetilde{\varepsilon}_{mm}\delta_{ij}+2\mu \widetilde{\varepsilon}_{ij},
\]
and
\[
\begin{split}
S^N_{ij}(x,u)=&\frac{\lambda+\mathscr{B}}{2}\frac{\partial u_m}{\partial x_n}\frac{\partial u_m}{\partial x_n}\delta_{ij}+\mathscr{C}\frac{\partial u_m}{\partial x_m}\frac{\partial u_n}{\partial x_n}\delta_{ij}+\frac{\mathscr{B}}{2}\frac{\partial u_m}{\partial x_n}\frac{\partial u_n}{\partial x_m}\delta_{ij}+\mathscr{B}\frac{\partial u_m}{\partial x_m}\frac{\partial u_j}{\partial x_i}\\
&+\frac{\mathscr{A}}{4}\frac{\partial u_j}{\partial x_m}\frac{\partial u_m}{\partial x_i}+(\lambda+\mathscr{B})\frac{\partial u_m}{\partial x_m}\frac{\partial u_i}{\partial x_j}\\
&+\left(\mu+\frac{\mathscr{A}}{4}\right)\left(\frac{\partial u_m}{\partial x_i}\frac{\partial u_m}{\partial x_j}+\frac{\partial u_i}{\partial x_m}\frac{\partial u_j}{\partial x_m}+\frac{\partial u_i}{\partial x_m}\frac{\partial u_m}{\partial x_j}\right)+\mathcal{O}(u^3).
\end{split}
\]

The linear elastic wave equation reads
\begin{equation}\label{elastic_linear_eq}
\begin{split}
&\rho\frac{\partial^2u}{\partial t^2}-\nabla\cdot S^L(x,u)=0,\quad (t,x)\in (0,T)\times\Omega,\\
&u(t,x)=f(t,x),\quad (t,x)\in (0,T)\times\partial \Omega,\\
&u(0,x)=\frac{\partial}{\partial t}u(0,x)=0,\quad x\in \Omega.
\end{split}
\end{equation}
We denote the Dirichlet-to-Neumann map for the above linear elastic wave equation as
\[
\Lambda^{lin}:f\rightarrow \nu\cdot S^L(x,u)\vert_{(0,T)\times\partial\Omega}.
\]

The \textit{S}- and \textit{P}- wavespeeds are related with the Lam\'e moduli $\lambda,\,\mu$ and the density $\rho$ in the following way
\[
c_S=\sqrt{\frac{\mu}{\rho}},\quad c_P=\sqrt{\frac{\lambda+2\mu}{\rho}}.
\]
We will assume
\[
\mu>0,\quad 3\lambda+2\mu>0~\text{on}~\overline{\Omega}.
\]
Then $c_P>c_S$ in $\overline{\Omega}$. Denote the Riemannian metrics associated with $P/S$- wave speeds to be
\[
g_{P/S}=c_{P/S}^{-2}\mathrm{d}s^2,
\]
where $\mathrm{d}s^2$ is the Euclidean metric.
Then \textit{P}- and \textit{S}- waves travel along geodesics in the Riemannian manifolds $(\Omega,g_P)$ and $(\Omega,g_S)$ respectively. Let $\mathrm{diam}_{P/S}(\Omega)$ be the diameter of $\Omega$ with respect to $g_{P/S}$. More precisely, 
\[
\mathrm{diam}_{P/S}(\Omega)=\sup\{\text{lengths of all geodesics in }(\Omega,g_{P/S})\}
\]

For the inverse problem, one can first recover the Dirchlet-to-Neumann map $\Lambda^{lin}$ for the linear elastic wave equation \eqref{elastic_linear_eq} by first order linearization of $\Lambda$ (cf. \cite{de2018nonlinear})
\[
\frac{\partial}{\partial\epsilon}\Lambda(\epsilon f)\vert_{\epsilon=0}=\Lambda^{lin}(f).
\]
It was shown in \cite{hansen2003propagation} that from  $\Lambda^{lin}$ one can recover the scattering relation associated to the wave speeds $c_P$ and $c_S$. Using the result of \cite{stefanov2017local} one can determine $c_S$ and $c_P$ if the foliation condition is satisfied for both metrics $g_{P/S}$ and $T$ is larger than $\mathrm{diam}_{S}(\Omega)$. For a more precise statement see \cite[Theorem 1.4]{stefanov2016boundary}. Recall that  a Riemannian manifold $(M,g)$ satisfies the foliation condition if it can be foliated by strictly convex hypersurfaces \cite{uhlmann2016inverse}. The foliation condition is satisfied for $(\Omega,g_{P/S})$, for instance, if $\partial\Omega$ is strictly convex (with respect to $g_{P/S}$) and the  wave speeds $c_{P/S}$ increase with depth. They are also satisfied under some additional conditions on the curvature (see \cite{paternain2019geodesic, SUV3}), if $\Omega$ is simply connected. As pointed out in \cite{stefanov2016boundary} the foliation condition is a natural generalization of the Herglotz \cite{herglotz1905elastizitaet} and the Wieckert-Zoeppritz \cite{wiechert1907erdbebenwellen} conditions. The foliation condition allows for conjugate points. If the boundary is strictly convex for $g_{P/S}$  and there are no conjugate points for $g_{P/S}$, the uniqueness of $c_P$ and $c_S$ was shown by Rachele in \cite{rachele2000inverse}.  One can in fact determine the three parameters $\lambda, \mu, \rho$, assuming further $c_P\ne 2c_S$ except at isolated points in $\overline{\Omega}$, under the foliation condition \cite{bhattacharyya2018local} or the no conjugate points condition (an extra curvature condition is needed) \cite{rachele2003uniqueness}. We summarize the results for the linear elastic wave equation \eqref{elastic_linear_eq} in the following.

\begin{proposition}
Assume $T>\max\{\mathrm{diam}_S(\Omega),\mathrm{diam}_P(\Omega)\}$, $\partial\Omega$ is strictly convex with respect to $g_{P/S}$, and either of the following conditions holds
\begin{enumerate}
\item $(\Omega,g_{P/S})$ has no conjugate points;
\item $(\Omega,g_{P/S})$ satisfies the foliation condition.
\end{enumerate}
Then $\Lambda^{lin}$ uniquely determines $\frac{\mu}{\rho}$ and $\frac{\lambda}{\rho}$ in $\overline{\Omega}$. Assume further that $\lambda=2\mu$ only at isolated points in $\overline{\Omega}$. When condition $(1)$ is satisfied, assume also that $(\Omega,g_P)$ is negatively curved. Then $\rho$ is uniquely determined.
\end{proposition}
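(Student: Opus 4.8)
The plan is to reduce the statement to a chain of known results, each resolving one layer of the inverse problem: first pass from the boundary map $\Lambda^{lin}$ to purely geometric data (the scattering/lens relations of the two metrics $g_P$ and $g_S$), then invert that geometric data to recover the wave speeds $c_P,c_S$, and finally exploit the finer structure of the elastic wave—its polarization and amplitude transport—to separate the density $\rho$ from the speeds.

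First I would recover the geometric data. Microlocally, $\Lambda^{lin}$ propagates singularities along the bicharacteristics of the $P$- and $S$-wave operators, whose projections to $\overline{\Omega}$ are geodesics of $g_P$ and $g_S$ respectively. As in \cite{hansen2003propagation}, this lets one read off from $\Lambda^{lin}$ the scattering relation (equivalently, under strict convexity of $\partial\Omega$ and the travel-time hypothesis $T>\max\{\mathrm{diam}_S(\Omega),\mathrm{diam}_P(\Omega)\}$, the lens/boundary-distance data) for each metric. The clean separation of $P$- and $S$-contributions relies on $c_P>c_S$, which holds here since $3\lambda+2\mu>0$.

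Next I would invert the geometric data to obtain the speeds. Under condition $(2)$, the foliation condition for $g_{P/S}$, strict convexity of $\partial\Omega$, and $T$ exceeding both diameters place us in the local boundary-rigidity setting of \cite{stefanov2017local,stefanov2016boundary}, which determines $c_S$ and $c_P$ from the scattering relation; equivalently this yields $\mu/\rho$ and $(\lambda+2\mu)/\rho$, hence $\lambda/\rho$. Under condition $(1)$, strict convexity together with the absence of conjugate points puts us in the boundary-rigidity setting of Rachele \cite{rachele2000inverse}, giving the same conclusion. Either way $\mu/\rho$ and $\lambda/\rho$ are determined in $\overline{\Omega}$.

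The hard part is the final step, recovering $\rho$ itself, since the speeds only fix the two ratios. Here I would track not merely the phase but the principal amplitude of a $P$- (or $S$-) progressing-wave / Gaussian-beam solution: the amplitude satisfies a transport equation along geodesics whose coefficients involve $\rho$ and the speeds, so the boundary trace of the amplitude encoded in $\Lambda^{lin}$ furnishes an integral-geometric (attenuation-type) constraint that, once inverted, pins down $\rho$. Two obstructions must be controlled. First, the $P$- and $S$-amplitude transports must not resonate, which is precisely why one assumes $\lambda=2\mu$ (i.e.\ $c_P=2c_S$) only at isolated points, so the mode-conversion terms can be separated. Second, in the no-conjugate-points case $(1)$ one must in addition invert a geodesic ray transform, which is why the extra hypothesis that $(\Omega,g_P)$ is negatively curved is imposed, guaranteeing its injectivity; under the foliation condition $(2)$ the corresponding inversion is supplied directly by \cite{bhattacharyya2018local}. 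Assembling these recovers $\rho$, and together with the already-known $\mu/\rho$ and $\lambda/\rho$ this determines all three of $\lambda,\mu,\rho$; in case $(1)$ this density recovery is exactly the content of \cite{rachele2003uniqueness}.
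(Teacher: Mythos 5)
Your proposal is correct and follows essentially the same route as the paper: both reduce the proposition to the identical chain of known results --- recovering the scattering relation for $g_{P/S}$ from $\Lambda^{lin}$ via \cite{hansen2003propagation}, determining $c_P$, $c_S$ via \cite{stefanov2017local,stefanov2016boundary} under the foliation condition or via \cite{rachele2000inverse} under no conjugate points, and then recovering $\rho$ (using $\lambda\neq 2\mu$ except at isolated points, with the extra curvature hypothesis tied to injectivity of the relevant ray/tensor transform) via \cite{bhattacharyya2018local} or \cite{rachele2003uniqueness}. The paper's own justification is exactly this citation chain, so no further comparison is needed.
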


\begin{remark}
Notice that when $(\Omega,g_{P/S})$ has no conjugate points, an extra curvature condition on $(\Omega,g_P)$ is needed for the unique determination of $\rho$. This is due to the fact that the injectivity (up to natural obstructions) of the related tensor tomography problem is established only under extra curvature conditions. The curvature condition can be relaxed using the results of \cite{sharafutdinov1992integral,dairbekov2006integral,paternain2015invariant}.
\end{remark}

 %From $\Lambda^{lin}$ one can determine $\frac{\lambda}{\rho},\frac{\mu}{\rho}$ if there is no %conjugate point in $(\Omega,g_{P/S})$ \cite{rachele2000inverse}. If $\lambda\neq 2\mu$ everywhere %in $\Omega$, the density $\rho$ can also be determined under further curvature conditions %\cite{rachele2003uniqueness}. The uniqueness of $\lambda,\mu,\rho$ is also established under %strictly convex foliation conditions \cite{stefanov2017local,bhattacharyya2018local}. 

 In this paper we mainly focus on the determination of the nonlinear elastic parameters $\mathscr{A},\mathscr{B},\mathscr{C}$. In \cite{de2018nonlinear}, the authors proved the uniqueness of $\mathscr{A}$ and $\mathscr{B}$, by analyzing the nonlinear interactions of distorted plane waves. The approach originated from \cite{kurylev2018inverse}, and has been successfully used to study inverse problems for nonlinear hyperbolic equations \cite{chen2019detection,kurylev2014inverse,lassas2018inverse,uhlmann2018determination,wang2019inverse}. 
We will present an alternative approach to the proof of the uniqueness of $\mathscr{A}$ and $\mathscr{B}$, and further extend to the uniqueness of $\mathscr{C}$. Our work is still based on the higher order linearization utilized in aforementioned work, but instead of distorted plane waves we will use Gaussian beams. We note here that Gaussian beams have been used to study various inverse problems \cite{bao2014sensitivity, belishev1996boundary, dos2016calderon,
feizmohammadi2019timedependent, feizmohammadi2019inverse,feizmohammadi2019recovery,katchalov1998multidimensional}. We emphasize here that Gaussian beams can be constructed allowing conjugate points.\\

We summarize the main theorem of this article here:
\begin{theorem}
Assume $T>2\,\max\{\mathrm{diam}_S(\Omega),\mathrm{diam}_P(\Omega)\}$, $\partial\Omega$ is strictly convex with respect to $g_{P/S}$, and either of the following conditions holds
\begin{enumerate}
\item $(\Omega,g_{P/S})$ has no conjugate points;
\item $(\Omega,g_{P/S})$ satisfies the foliation condition.
\end{enumerate}
Assume $\lambda,\mu,\rho$ can be recovered from $\Lambda^{lin}$. Then $\Lambda$ determines $\lambda,\mu,\rho,\mathscr{A},\mathscr{B},\mathscr{C}$ in $\overline{\Omega}$ uniquely.
\end{theorem}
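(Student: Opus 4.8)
\section*{Proof proposal}

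The plan is to reduce the recovery of the nonlinear parameters to a family of interior integral identities produced by second order linearization, and then to localize those identities at an arbitrary interior point using Gaussian beams. Since $\lambda,\mu,\rho$ are assumed recoverable from $\Lambda^{lin}$, and $\Lambda^{lin}$ is itself the first order linearization $\partial_\epsilon\Lambda(\epsilon f)|_{\epsilon=0}$, only $\mathscr{A},\mathscr{B},\mathscr{C}$ remain to be determined. I would set $f=\epsilon_1 f_1+\epsilon_2 f_2$ and let $u=u(\epsilon_1,\epsilon_2)$ be the corresponding solution, which is smooth in $(\epsilon_1,\epsilon_2)$ near $0$ by the well-posedness stated above. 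Writing $v_l=\partial_{\epsilon_l}u|_{\epsilon=0}$, each $v_l$ solves the linear elastic problem \eqref{elastic_linear_eq} with data $f_l$, while the mixed second derivative $w=\partial_{\epsilon_1}\partial_{\epsilon_2}u|_{\epsilon=0}$ solves the linear equation with zero Cauchy data and a source $\nabla\cdot\mathcal{G}(v_1,v_2)$, where $\mathcal{G}$ is the symmetric bilinear form obtained by polarizing the quadratic stress $S^N$. The key structural point is that $\mathcal{G}$ is \emph{linear} in $\mathscr{A},\mathscr{B},\mathscr{C}$, the remaining coefficients being built from the already known $\lambda,\mu$. The second order linearization $\partial_{\epsilon_1}\partial_{\epsilon_2}\Lambda(\epsilon f)|_{\epsilon=0}$ is determined by $\Lambda$ and equals the boundary traction of $w$ together with an explicit boundary contribution of $\mathcal{G}(v_1,v_2)$.

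Next I would pair this against a third linear solution $v_3$ with boundary data $f_3$ and integrate by parts in $(t,x)$ over $(0,T)\times\Omega$. Using that the linear elastic operator is formally self-adjoint, that $w$ has vanishing Cauchy data at $t=0$ while $v_3$ is chosen with vanishing Cauchy data at $t=T$, and that all boundary terms are expressible through the measured second order linearization, this produces an identity of the schematic form
\begin{equation*}
\int_0^T\!\!\int_\Omega \mathcal{G}_{ij}(v_1,v_2)\,\frac{\partial (v_3)_i}{\partial x_j}\,\mathrm{d}x\,\mathrm{d}t=\bigl(\text{quantity determined by }\Lambda\bigr).
\end{equation*}
The left-hand side is trilinear in $(v_1,v_2,v_3)$ and linear in $(\mathscr{A},\mathscr{B},\mathscr{C})$, so the remaining task is to choose the three solutions so as to read off these coefficients pointwise.

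For this I would take $v_1,v_2,v_3$ to be Gaussian beams of large frequency $\tau$ concentrated along geodesics of $g_P$ or $g_S$ through a fixed interior point $y$, each carrying a polarization vector parallel to the propagation direction for a $P$ beam and orthogonal to it for an $S$ beam. The hypotheses $T>2\max\{\mathrm{diam}_S(\Omega),\mathrm{diam}_P(\Omega)\}$ and strict convexity of $\partial\Omega$, combined with either the no conjugate point or the foliation condition, guarantee that such beams reach every $y\in\Omega$ within the time window and that the three geodesics can be arranged to meet only at $y$. Choosing the phases so that the real phase gradients balance at $y$ (a three wave resonance $\nabla\phi_1+\nabla\phi_2=\nabla\phi_3$ on the covectors, realized by conjugating the third beam), the transverse Gaussian factors force the integral to concentrate at $y$; a stationary phase computation then extracts, to leading order in $\tau$, a nonzero multiple of a specific contraction of $\mathcal{G}$ with the three polarization tensors evaluated at $y$.

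The final and most delicate step is to show that, as the wave types ($P$ or $S$) and the admissible directions and polarizations through $y$ are varied subject to the resonance constraint, the resulting linear functionals of $(\mathscr{A},\mathscr{B},\mathscr{C})(y)$ span a three dimensional space, so that all three parameters are determined at $y$ and hence on $\overline{\Omega}$ by continuity. This is the main obstacle: it requires the explicit symbol computation of the triple beam interaction, tracking how each tensor contraction multiplying $\mathscr{A}$, $\mathscr{B}$, $\mathscr{C}$ in $S^N$ responds to the chosen polarizations, and then verifying that the allowed $P/S$ combinations yield three independent equations. I expect the longitudinal/transverse structure of the polarizations to be precisely what separates the coefficients: pure $P$ interactions should isolate $\mathscr{C}$ together with the $\lambda,\mathscr{B}$ combinations, whereas interactions involving $S$ polarizations bring in the $\mathscr{A}$ and $\mu$ contributions, supplying the complementary information needed to invert for all three.
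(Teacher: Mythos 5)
Your setup (second order linearization, the integral identity against a backward solution, and the three-wave resonance of Gaussian beams with stationary phase) is exactly the paper's route, and it does recover $\mathscr{A}$ and $\mathscr{B}$: the $P$+$SV$+$SV$ resonance yields the two functionals $\lambda+\mathscr{B}$ and $2\mu+\tfrac{\mathscr{A}}{2}$ at the interaction point. The genuine gap is your final step, where you expect that "pure $P$ interactions should isolate $\mathscr{C}$" and that the functionals span a three dimensional space. This fails for two structural reasons, both noted explicitly in the paper. First, the only term in $\mathcal{G}$ carrying $\mathscr{C}$ is $2\mathscr{C}(\nabla\cdot u^{(1)})(\nabla\cdot u^{(2)})(\nabla\cdot v)$, and for an $S$ beam the polarization is orthogonal to the phase gradient along the central geodesic, so its divergence loses a factor of $\varrho$ there; hence any resonant interaction involving even one $S$ wave contributes nothing to $\mathscr{C}$ at leading order. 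Second, a pure $P$ resonance at a point is geometrically impossible: if three covectors on the same light cone $L^{P,*}_pM$ are linearly dependent, the cone condition forces $\tau^{(1)}\tau^{(2)}=c_P^2\,\xi^{(1)}\cdot\xi^{(2)}$, i.e.\ two of them are parallel, so one can never arrange three pairwise independent $P$ directions whose phase gradients balance at $p$. Consequently the pointwise stationary-phase functionals span only a two dimensional space and $\mathscr{C}(y)$ is never reached by your scheme.

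The paper's resolution is to embrace the degenerate configuration rather than avoid it: after $\mathscr{A},\mathscr{B}$ are known, take $u^{(1)}=u^{(2)}$ to be the \emph{same} $P$ beam with phase $\varphi$ along a single null geodesic $\vartheta$, and take $v$ with phase $-2\overline{\varphi}$ along that same geodesic. The oscillations cancel ($\varphi+\varphi-2\overline{\varphi}=4\mathrm{i}\,\Im\varphi$), so instead of localizing at a point the interaction concentrates along all of $\vartheta$ and produces the weighted ray transform $\int_\vartheta \mathscr{C}\,c_P^{-9/2}\rho^{-3/2}(\det Y(\tau))^{-1/2}\,\mathrm{d}\tau$, a Jacobi weighted ray transform of the first kind. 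Varying the admissible initial data of the Riccati/Jacobi system gives a whole family of weights $Y\in\mathbb{Y}_\gamma$ along one fixed geodesic, and by the injectivity result of Feizmohammadi--Oksanen this family already determines $\mathscr{C}$ pointwise when $(\Omega,g_P)$ has no conjugate points; under the foliation condition one instead uses layer stripping or the invertibility of weighted geodesic ray transforms. Any correct completion of your argument needs this (or an equivalent) second mechanism; the pointwise resonance alone cannot close the proof.
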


The rest of this paper is organized as follows. In Section \ref{linearization}, we carry out the second order linearization of the displacement-to-traction map and derive an integral identity, from which we can recover the parameters of interest. In Section \ref{gaussianbeam}, we construct Gaussian beam solutions to linear elastic wave equation, for both \textit{P}- and \textit{S}- waves. Finally, Section \ref{main} is devoted to the proof of the main theorem.
\section{Second-order linearization of displacement-to-traction map}\label{linearization}
We will apply the higher order linearization technique introduced in \cite{kurylev2018inverse} to the displacement-to-traction map $\Lambda$, and arrive at an integral identity which could be used for the recovery of the parameters. The linearization of $\Lambda$ itself has already been used in \cite{de2018nonlinear}. Higher order linearization of Dirichlet-to-Neumann map and the resulted integral identities for semilinear and quasilinear elliptic equations are used \cite{sun1997inverse,kang2002identification,assylbekov2017direct,carstea2019reconstruction,lassas2019inverse,lassas2019partial,feizmohammadi2019inverse,krupchyk2019partial}.  Assume $u$ solves \eqref{elastic_eq} with Dirichlet boundary value
\[
f=\epsilon_1f^{(1)}+\epsilon_2f^{(2)}.
\]
Denote $u^{(j)},\, j=1,2$ to be the solution to the linearized elastic wave equation with boundary value $f^{(j)}$, i.e.,
\begin{equation}\label{elastic_eq_linearized}
\begin{split}
&\rho\frac{\partial^2u^{(j)}}{\partial t^2}-\nabla\cdot S^L(x,u^{(j)})=0,\quad (t,x)\in (0,T)\times\Omega,\\
&u^{(j)}=f^{(j)},\quad \text{on } (0,T)\times\partial \Omega,\\
&u^{(j)}(0,x)=\frac{\partial}{\partial t}u^{(j)}(0,x)=0,\quad x\in \Omega.
\end{split}
\end{equation}

Applying $\frac{\partial^2}{\partial\epsilon_1\partial\epsilon_2}$ to \eqref{elastic_eq}, we obtain the equation for $\mathcal{U}^{(12)}=\frac{\partial^2}{\partial\epsilon_1\partial\epsilon_2}u\vert_{\epsilon_1=\epsilon_2=0}$.
\begin{equation}\label{elastic_U2}
\begin{split}
&\rho\frac{\partial^2}{\partial t^2}\mathcal{U}^{(12)}-\nabla\cdot S^L(x,\mathcal{U}^{(12)})=\nabla\cdot G(u^{(1)},u^{(2)}),\quad (t,x)\in (0,T)\times\Omega,\\
&\mathcal{U}^{(12)}(t,x)=0,\quad (t,x)\in (0,T)\times\partial \Omega,\\
&\mathcal{U}^{(12)}(0,x)=\frac{\partial}{\partial t}\mathcal{U}^{(12)}(0,x)=0,\quad x\in \Omega.
\end{split}
\end{equation}
Here
\begin{equation}\label{G_form}
\begin{split}
G_{ij}(u^{(1)},u^{(2)})=&(\lambda+\mathscr{B})\frac{\partial u^{(1)}_m}{\partial x_n}\frac{\partial u^{(2)}_m}{\partial x_n}\delta_{ij}+2\mathscr{C}\frac{\partial u^{(1)}_m}{\partial x_m}\frac{\partial u^{(2)}_n}{\partial x_n}\delta_{ij}+\mathscr{B}\frac{\partial u_m^{(1)}}{\partial x_n}\frac{\partial u_n^{(2)}}{\partial x_m}\delta_{ij}\\
&+\mathscr{B}\left(\frac{\partial u^{(1)}_m}{\partial x_m}\frac{\partial u^{(2)}_j}{\partial x_i}+\frac{\partial u^{(2)}_m}{\partial x_m}\frac{\partial u^{(1)}_j}{\partial x_i}\right)+\frac{\mathscr{A}}{4}\left(\frac{\partial u^{(1)}_j}{\partial x_m}\frac{\partial u^{(2)}_m}{\partial x_i}+\frac{\partial u^{(2)}_j}{\partial x_m}\frac{\partial u^{(1)}_m}{\partial x_i}\right)\\
&+(\lambda+\mathscr{B})\left(\frac{\partial u^{(1)}_m}{\partial x_m}\frac{\partial u^{(2)}_i}{\partial x_j}+\frac{\partial u^{(2)}_m}{\partial x_m}\frac{\partial u^{(1)}_i}{\partial x_j}\right)\\
&+\left(\mu+\frac{\mathscr{A}}{4}\right)\Bigg(\frac{\partial u^{(1)}_m}{\partial x_i}\frac{\partial u^{(2)}_m}{\partial x_j}+\frac{\partial u^{(2)}_m}{\partial x_i}\frac{\partial u^{(1)}_m}{\partial x_j}+\frac{\partial u^{(1)}_i}{\partial x_m}\frac{\partial u^{(2)}_j}{\partial x_m}+\frac{\partial u^{(2)}_i}{\partial x_m}\frac{\partial u^{(1)}_j}{\partial x_m}\\
&\quad\quad+\frac{\partial u^{(1)}_i}{\partial x_m}\frac{\partial u^{(2)}_m}{\partial x_j}+\frac{\partial u^{(2)}_i}{\partial x_m}\frac{\partial u^{(1)}_m}{\partial x_j}\Bigg).
\end{split}
\end{equation}

We note that
\[
\frac{\partial^2}{\partial\epsilon_1\partial\epsilon_2}\Lambda(\epsilon_1f^{(1)}+\epsilon_2f^{(2)})\vert_{\epsilon_1=\epsilon_2=0}=\nu\cdot S^L(\mathcal{U}^{(12)})+\nu\cdot G(u^{(1)},u^{(2)}).
\]
Assume $v$ solves the initial boundary value problem for the backward elastic wave equation
\begin{equation}\label{backward_eq}
\begin{split}
&\rho\frac{\partial^2}{\partial t^2}v-\nabla\cdot S^L(v)=0,\quad (t,x)\in (0,T)\times\Omega,\\
&v(t,x)=g,\quad (t,x)\in (0,T)\times\partial \Omega,\\
&v(T,x)=\frac{\partial}{\partial t}v(T,x)=0,\quad x\in \Omega.
\end{split}
\end{equation}
Using integration by parts, we get
\[
\begin{split}
&\int_0^T\int_{\partial\Omega}\left(\frac{\partial^2}{\partial\epsilon_1\partial\epsilon_2}\Lambda(\epsilon_1f_1+\epsilon_2f_2)\vert_{\epsilon_1=\epsilon_2=0}-\nu\cdot G(u^{(1)},u^{(2)})\right)g\,\mathrm{d}S\mathrm{d}t\\
=&\int_0^T\int_{\partial\Omega}\nu\cdot S^L(\mathcal{U}^{(12)})g\,\mathrm{d}S\mathrm{d}t\\
=&\int_0^T\int_{\Omega}\left(\nabla\cdot S^L(\mathcal{U}^{(12)})v+\mathbf{C}\nabla \mathcal{U}^{(12)}:\nabla v \right)\,\mathrm{d}x\mathrm{d}t\\
=&\int_0^T\int_{\Omega}\left(\rho\frac{\partial^2}{\partial t^2}\mathcal{U}^{(12)}-\nabla\cdot G(u^{(1)},u^{(2)})\right)v+\mathbf{C}\nabla \mathcal{U}^{(12)}:\nabla v \,\mathrm{d}x\mathrm{d}t\\
=&\int_0^T\int_{\Omega}\rho\mathcal{U}^{(12)}\frac{\partial^2}{\partial t^2}v-\nabla\cdot G(u^{(1)},u^{(2)})v+\mathbf{C}\nabla \mathcal{U}^{(12)}:\nabla v \,\mathrm{d}x\mathrm{d}t\\
=&\int_0^T\int_{\Omega}\mathcal{U}^{(12)}\rho\frac{\partial^2}{\partial t^2}v-\nabla\cdot G(u^{(1)},u^{(2)})v- \mathcal{U}^{(12)} \nabla \cdot S^L(v) \mathrm{d}x\mathrm{d}t+\int_0^T\int_{\partial\Omega}\nu\cdot S^L(v)\mathcal{U}^{(12)}\,\mathrm{d}S\mathrm{d}t\\
=&-\int_0^T\int_{\Omega}\nabla\cdot G(u^{(1)},u^{(2)})v\,\mathrm{d}x\mathrm{d}t.
\end{split}
\]
Here we use the notation $A:B=\sum_{i,j=1}^3A_{ij}B_{ij}$ for matrices $A$ and $B$.

Therefore, the displacement-to-traction map determines
\begin{equation}\label{integral_G}
\begin{split}
&\int_0^T\int_{\partial\Omega}\left(\frac{\partial^2}{\partial\epsilon_1\partial\epsilon_2}\Lambda(\epsilon_1f^{(1)}+\epsilon_2f^{(2)})\vert_{\epsilon_1=\epsilon_2=0}\right)g\,\mathrm{d}S\mathrm{d}t\\
=&-\int_0^T\int_{\Omega}\nabla\cdot G(u^{(1)},u^{(2)})v\,\mathrm{d}x\mathrm{d}t+\int_0^T\int_{\partial\Omega}\nu\cdot G(u^{(1)},u^{(2)})g\,\mathrm{d}S\mathrm{d}t\\
=&\int_0^T\int_{\Omega}\mathcal{G}(\nabla u^{(1)},\nabla u^{(2)},\nabla v)\,\mathrm{d}x\mathrm{d}t,
\end{split}
\end{equation}
where
\begin{equation}\label{integrand_G}
\begin{split}
&\mathcal{G}(\nabla u^{(1)},\nabla u^{(2)},\nabla v)\\
=&(\lambda+\mathscr{B})(\nabla u^{(1)}:\nabla u^{(2)})(\nabla\cdot v)+2\mathscr{C} (\nabla\cdot u^{(1)})(\nabla\cdot u^{(2)})(\nabla\cdot v)+\mathscr{B}(\nabla u^{(1)}:\nabla^T u^{(2)})(\nabla\cdot v)\\
&+\mathscr{B}\left( (\nabla\cdot u^{(1)})(\nabla u^{(2)}:\nabla^T v)+(\nabla\cdot u^{(2)})(\nabla u^{(1)}:\nabla^T v)\right)\\
&+\frac{\mathscr{A}}{4}\left(\frac{\partial u^{(1)}_j}{\partial x_m}\frac{\partial u^{(2)}_m}{\partial x_i}+\frac{\partial u^{(2)}_j}{\partial x_m}\frac{\partial u^{(1)}_m}{\partial x_i}\right)\frac{\partial v_i}{\partial x_j}\\
&+(\lambda+\mathscr{B})\left((\nabla\cdot u^{(1)})(\nabla u^{(2)}:\nabla v)+(\nabla\cdot u^{(2)})(\nabla u^{(1)}:\nabla v)\right)\\
&+\left(\mu+\frac{\mathscr{A}}{4}\right)\Bigg(\frac{\partial u^{(1)}_m}{\partial x_i}\frac{\partial u^{(2)}_m}{\partial x_j}+\frac{\partial u^{(2)}_m}{\partial x_i}\frac{\partial u^{(1)}_m}{\partial x_j}+\frac{\partial u^{(1)}_i}{\partial x_m}\frac{\partial u^{(2)}_j}{\partial x_m}+\frac{\partial u^{(2)}_i}{\partial x_m}\frac{\partial u^{(1)}_j}{\partial x_m}\\
&\quad\quad\quad\quad\quad\quad\quad\quad\quad+\frac{\partial u^{(1)}_i}{\partial x_m}\frac{\partial u^{(2)}_m}{\partial x_j}+\frac{\partial u^{(2)}_i}{\partial x_m}\frac{\partial u^{(1)}_m}{\partial x_j}\Bigg)\frac{\partial v_i}{\partial x_j}.
\end{split}
\end{equation}
Here we use the notation $\nabla^Tu=(\nabla u)^T$.\\

We will construct special solutions $u^{(1)}, u^{(2)},v$ for the linear elastic wave equation and recover the parameters $\mathscr{A},\mathscr{B},\mathscr{C}$ from the integral \eqref{integral_G}. We emphasize here that the solutions will be constructed with known coefficients $\lambda,\mu,\rho$ in the linearized equation.

\section{Gaussian beam solutions}\label{gaussianbeam}
Denote
\[
M=[0,T]\times\Omega.
\]
We note that $M$ can be viewed as a Lorentzian manifold with metric $-\mathrm{d}t^2+g_{P}$ or $-\mathrm{d}t^2+g_{S}$.

In this section, we will construct Gaussian beam solutions $u$ to the linear elastic wave equation
\begin{equation}\label{elastic_eq_linearized_2}
\begin{split}
&\rho\frac{\partial^2u}{\partial t^2}-\nabla\cdot S^L(x,u)=0,\quad (t,x)\in (0,T)\times\Omega,\\
&u(0,x)=\frac{\partial}{\partial t}u(0,x)=0,\quad x\in \Omega,
\end{split}
\end{equation}
of the form
\[
u(t,x)=e^{\mathrm{i}\varrho\varphi(t,x)}\mathfrak{a}(t,x)+R_\varrho(t,x),
\]
with a large parameter $\varrho$. The phase function $\varphi$ is complex-valued. The principal term $e^{\mathrm{i}\varrho\varphi(t,x)}\mathfrak{a}(t,x)$ is concentrated near a null geodesic $\vartheta$ in $(M,-\mathrm{d}t^2+g_{P/S})$. 
%Here $\vartheta(t)=(t,\gamma(t))$ where $\gamma$ is a geodesic in the Riemannian manifold $(\Omega,g_{P/S})$. We will assume $\vartheta:\,(t_-,t_+)\rightarrow M$, such that $\vartheta(t_-),\vartheta(t_+)\in (0,T)\times\partial\Omega$. 
The remainder term $R_\varrho$ will vanish as $\varrho\rightarrow+\infty$.\\

For the construction of term $e^{\mathrm{i}\varrho\varphi(t,x)}\mathfrak{a}(t,x)$, we consider the equation
\[
\rho\frac{\partial^2u}{\partial t^2}-\nabla\cdot S^L(x,u)=0
\]
in an extended domain $\widetilde{\Omega}$, such that $\Omega\subset\subset \widetilde{\Omega}$. The parameters $\lambda,\mu,\rho$ are extended  smoothly to $\widetilde{\Omega}$. Also denote $\widetilde{M}=[0,T]\times \widetilde{\Omega}$.

We want to mention here that if either $(\Omega,g_{P/S})$ has no conjugate points or satisfies the foliation condition, then $(\Omega,g_{P/S})$ is non-trapping, i.e., every geodesic hits the boundary in finite time. See \cite[Proposition 3.31]{paternain_notes} and \cite[Lemma 2.1]{paternain2019geodesic}.\\

\noindent\textbf{Fermi coordinates.} We introduce Fermi coordinates in a neighborhood of the null geodesic $\vartheta$. Assume $\vartheta(t)=(t,\gamma(t))$, where $\gamma$ is a unit-speed geodesic in the Riemannian manifold $(\widetilde{\Omega},g)$, where $g=g_{P/S}$ is of interest to us. Assume $\vartheta$ passes through a point $(t_0,x_0)\in M$, i.e. $t_0\in (0,T)$ and $\gamma(t_0)=x_0\in\Omega$, and $\vartheta$ joins two points $(t_-,\gamma(t_-))$ and $(t_+,\gamma(t_+))$ where  $t_-,t_+\in (0,T)$ and $\gamma(t_-),\gamma(t_+)\in\partial\Omega$. Extend $\vartheta$ to $\widetilde{M}$ such that $\gamma(t)$ is well defined on $[t_--\epsilon,t_++\epsilon]\subset (0,T)$ with $\epsilon$ a small constant. We will follow the construction of the coordinates in \cite{feizmohammadi2019timedependent}. See also \cite{kurylev2014inverse}, \cite{uhlmann2018determination}.

Choose $\alpha_2,\alpha_3$ such that $\{\dot{\gamma}(t_0),\alpha_2,\alpha_3\}$ forms an orthonormal basis for $T_{x_0}\Omega$. Let $s$ denote the arc length along $\gamma$ from $x_0$. We note here that $s$ can be positive or negative, and $(t_0+s,\gamma(t_0+s))=\vartheta(t_0+s)$. For $k=2,3$, let $e_k(s)\in T_{\gamma(t_0+s)}\Omega$ be the parallel transport of $\alpha_k$ along $\gamma$ to the point $\gamma(t_0+s)$.

Define the coordinate system $(y^1=s,y^2,y^3)$ through $\mathcal{F}_1:\mathbb{R}^{3}\rightarrow \widetilde{\Omega}$:
\[
\mathcal{F}_1(s,y^2,y^3)=\exp_{\gamma(t_0+s)}\left(y^2e_2(s)+y^3e_3(s)\right).
\]
In the new coordinates, we have
\[
g\vert_{\gamma}=\sum_{j=1}^3(\mathrm{d}y^j)^2,\quad\text{and}\quad\frac{\partial g_{jk}}{\partial y^i}\Big\vert_\gamma=0,~1\leq i,j,k\leq 3.
\]
Then the Euclidean metric $g_E$ of $\mathbb{R}^3$ takes the form
\[
g_E=\sum_{1\leq i,j\leq 3}c^2g_{ij}\mathrm{d}y^i\mathrm{d}y^j.
\]
The Christoffel symbols then have the form
\begin{equation}\label{Christoffel}
\begin{split}
&\Gamma_{\alpha\beta}^1=-c^{-1}\frac{\partial c}{\partial s} g_{\alpha\beta},\quad \Gamma_{1\alpha}^\beta=\delta^\alpha_\beta c^{-1}\frac{\partial c}{\partial s},\quad \Gamma_{1\alpha}^1=c^{-1}\frac{\partial c}{\partial y^\alpha},\\
&\Gamma_{11}^\alpha=-c^{-1}g^{\alpha\beta}\frac{\partial c}{\partial y^\beta},\quad \Gamma_{11}^1=c^{-1}\frac{\partial c}{\partial s}.
\end{split}
\end{equation}
Here $\alpha,\beta\in \{2,3\}$ and $c=c_{P/S}$. 

On the Lorentzian manifold $(\widetilde{M},-\mathrm{d}t^2+ g)$, near the null geodesic $\vartheta:(t_--\frac{\epsilon}{2},t_++\frac{\epsilon}{2})\rightarrow \widetilde{M}$ where $\vartheta(t)=(t,\gamma(t))$, we introduce the Fermi coordinates,
\[
z^0=\tau=\frac{1}{\sqrt{2}}(t-t_0+s),\quad z^1=r=\frac{1}{\sqrt{2}}(-t+t_0+s),\quad z^j=y^j,\, j=2,3.
\]
Denote $\tau_\pm=\sqrt{2}(t_\pm-t_0)$.
Then on $\vartheta$ we have $\overline{g}=-\mathrm{d}t^2+ g$ satisfying
\[
\overline{g}\vert_\vartheta=2\mathrm{d}\tau\mathrm{d}r+\sum_{j=2}^3(\mathrm{d}z^j)^2\quad\text{and}\quad\frac{\partial \overline{g}_{jk}}{\partial z^i}\Big\vert_\vartheta=0,~0\leq i,j,k\leq 3.
\]
%Take
%\[
%\xi^{(1)}=(1,c_S,0,0),\quad \xi^{(2)}=(-1,c_S,0,0),
%\]
% It is easy to verify that
%\[
%\xi^{(1)},\,\xi^{(2)}\in L_{(t,p)}((0,T)\times\Omega,-\mathrm{d}t^2+c_S^{-2}\mathrm{d}s^2).
%\]
%Take $\sigma>0$ such that $\frac{c_P}{c_S}=\frac{1+\sigma}{1-\sigma}$, then
%\[
%\xi=\xi^{(1)}+\sigma\xi^{(2)}\in L_{(t,p)}((0,T)\times\Omega,-\mathrm{d}t^2+c_P^{-2}\mathrm{d}s^2).
%\]
%Denote
%\[
%\xi=\frac{1}{1-\sigma}\xi^{(0)}:=\frac{1}{1-\sigma}(1,c_P,0,0)
%\]
We will use the notations $z=(\tau,z')=(\tau=z^0,r=z^1,z'')$ and $y=(s=y^1,y').$\\

\noindent\textbf{Construction of Gaussian beams.} We will construct approximate Gaussian beam of order $N$ of the form
\[
u_\varrho=\mathfrak{a}e^{\mathrm{i}\varrho\varphi}
\]
with 
\[
\varphi=\sum_{k=0}^N\varphi_k(\tau,z'),\quad \mathfrak{a}(\tau,z')=\chi\left(\frac{|z'|}{\delta}\right)\sum_{k=0}^{N+1}\varrho^{-k}\mathbf{a}_k(\tau,z'),
\]
in a neighborhood of $\vartheta$,
\[
\mathcal{V}=\{(\tau,z')\in\widetilde{M}\vert \tau\in [\tau_--\frac{\epsilon}{\sqrt{2}},\tau_++\frac{\epsilon}{\sqrt{2}}],\,|z'|<\delta\}.
\]
Here $\delta>0$ is a small parameter. The smooth function $\chi:\mathbb{R}\rightarrow [0,+\infty)$ satisfies $\chi(t)=1$ for $|t|\leq\frac{1}{4}$ and $\chi(t)=0$ for $|t|\geq \frac{1}{2}$. We refer to \cite{feizmohammadi2019recovery} for more details. We denote $\mathbf{a}_0=\mathbf{a}=(a_1,a_2,a_3)$ and $\mathbf{a}_1=\mathbf{b}=(b_1,b_2,b_3)$. We note here that an extra term $\mathbf{a}_{N+1}$ is needed here to achieve order $N$ approximation, in compare with the solution constructed in \cite{feizmohammadi2019recovery}. The parameter $\delta$ is small such that $\mathfrak{a}\vert_{t=0}=\mathfrak{a}\vert_{t=T}=0$.\\

%In Fermi coordinates, we can write the linearized stress as
%\[
%S^L(u)_{ij}=\lambda g^{kl}\widetilde{\varepsilon}_{kl}(u)g_{ij}+2\mu\widetilde{\varepsilon}_{ij}(u).
%\]
In a neighborhood of $\vartheta$, we calculate
\[
\begin{split}
\widetilde{\varepsilon}_{k\ell}(u_\varrho)=&\frac{1}{2}(a_{k;\,\ell}+a_{\ell;\,k})e^{\mathrm{i}\varrho\varphi}+\frac{1}{2}\varrho^{-1}(b_{k;\,\ell}+b_{\ell;\,k})e^{\mathrm{i}\varrho\varphi}\\
&+\frac{1}{2}\mathrm{i}\varrho(a_k\varphi_{;\,\ell}+a_\ell\varphi_{;\,k})e^{\mathrm{i}\varrho\varphi}+\frac{1}{2}\mathrm{i}(b_k\varphi_{;\,\ell}+b_\ell\varphi_{;\,k})e^{\mathrm{i}\varrho\varphi},
\end{split}
\]
and
\[
\begin{split}
\sigma_{ij}(u_\varrho):=S^L_{ij}(u_\varrho)=&\lambda e^{k\ell}\widetilde{\varepsilon}_{k\ell}e_{ij}+2\mu\widetilde{\varepsilon}_{ij}\\
=&\lambda c^{-2}g^{k\ell}\widetilde{\varepsilon}_{k\ell}c^2g_{ij}+2\mu\widetilde{\varepsilon}_{ij}\\
=&\mathrm{i}\varrho(\lambda a_k\varphi_{;\ell}g^{k\ell}g_{ij}+\mu a_i\varphi_{;\,j}+\mu a_j\varphi_{;\,i})e^{\mathrm{i}\varrho\varphi}\\
&+(\lambda a_{k;\ell}g^{k\ell}g_{ij}+\mathrm{i}\lambda b_k \varphi_{;\ell}g^{k\ell}g_{ij}+\mu (a_{i;\,j}+a_{j;\,i})+\mathrm{i}\mu(b_i\varphi_{;\,j}+b_j\varphi_{;\,i}))e^{\mathrm{i}\varrho\varphi}\\
&+\mathcal{O}(\varrho^{-1}).
%+\varrho^{-1}[\lambda b_{k;}^{~\,~k}c_P^2g_{ij}+\mu(b_{i;\,j}+b_{j;\,i})]e^{\mathrm{i}\varrho\varphi}
\end{split}
\]
%Notice
%\[
%\begin{split}
%a_{\alpha;\,\beta}=&\frac{\partial a_\alpha}{\partial x^\beta}-\Gamma_{\alpha\beta}^ka_k\\
%=&-\Gamma_{\alpha\beta}^1c_PA_P\\
%=&\frac{1}{2}c_P^{-2}\frac{\partial g_{\alpha\beta}}{\partial s}c_PA_P\\
%=&\frac{1}{2}c_P^{-1}A_P\frac{\partial g_{\alpha\beta}}{\partial s}.
%\end{split}
%\]
%\[
%\begin{split}
%a_{1;1}=&\frac{\partial a_1}{\partial s}-\Gamma_{11}^ka_k\\
%=&\frac{\partial (c_PA_P)}{\partial s}-\Gamma_{11}^1a_1\\
%=&c_P\frac{\partial A_P}{\partial s}.
%\end{split}
%\]
We proceed to calculate
\[
\begin{split}
\sigma_{ij;m}=&\partial_m\sigma_{ij}-\Gamma_{im}^n\sigma_{nj}-\Gamma_{jm}^n\sigma_{ni}\\
=&-\varrho^2(\lambda a_k\varphi_{;\ell}g^{k\ell}g_{ij}+\mu a_i\varphi_{;j}+\mu a_j\varphi_{;i})\varphi_{;m}e^{\mathrm{i}\varrho\varphi}\\
&+\mathrm{i}\varrho \partial_m(\lambda a_k\varphi_{;\ell}g^{k\ell}g_{ij}+\mu a_i\varphi_{;j}+\mu a_j\varphi_{;i})e^{\mathrm{i}\varrho\varphi}\\
&+\mathrm{i}\varrho\left(\lambda a_{k;\ell}g^{k\ell}g_{ij}+\mu(a_{i;j}+a_{j;i})+\mathrm{i} \lambda b_k\varphi_{;\ell}g^{k\ell}g_{ij}+\mathrm{i}\mu(b_i\varphi_{;j}+b_j\varphi_{;i})\right)\varphi_{;m}e^{\mathrm{i}\varrho\varphi}\\
&-\mathrm{i}\varrho\Gamma_{im}^n(\lambda a_k\varphi_{;\ell}g^{k\ell}g_{nj}+\mu a_n\varphi_{;j}+\mu a_j\varphi_{;n})e^{\mathrm{i}\varrho\varphi}\\
&-\mathrm{i}\varrho\Gamma_{jm}^n(\lambda a_k\varphi_{;\ell}g^{k\ell}g_{ni}+\mu a_n\varphi_{;i}+\mu a_i\varphi_{;n})e^{\mathrm{i}\varrho\varphi}+\mathcal{O}(1).
\end{split}
\]
and
\[
\begin{split}
(\nabla\cdot S^L)_i=\sigma_{ij;}^{\quad j}=&\sigma_{ij;m}g^{jm}c^{-2}\\
=&-\varrho^2(\lambda a_k\varphi_{;\ell}g^{k\ell}\varphi_{;i}+\mu a_i\varphi_{;j}\varphi_{;m}g^{jm}+\mu a_j\varphi_{;m}g^{jm}\varphi_{;i})c^{-2}e^{\mathrm{i}\varrho\varphi}\\
&+\mathrm{i}\varrho c^{-2}\left(\partial_i(\lambda a_k\varphi_{;\ell}g^{k\ell})+\lambda a_k\varphi_{;\ell}g^{k\ell}\partial_mg_{ij}g^{jm}+\partial_m(\mu a_i\varphi_{;j}+\mu a_j\varphi_{;i})g^{jm}\right)e^{\mathrm{i}\varrho\varphi}\\
&+\mathrm{i}\varrho c^{-2}\Big(\lambda a_{k;\ell}g^{k\ell}\varphi_{;i}+\mu(a_{i;j}+a_{j;i})\varphi_{;m}g^{jm}+\mathrm{i}\lambda b_k\varphi_{;\ell}g^{k\ell}\varphi_{;i}\\
&\quad\quad\quad\quad\quad\quad\quad\quad+\mathrm{i}\mu(b_i\varphi_{;j}\varphi_{;m}g^{jm}+\varphi_{;i}b_j\varphi_{;m}g^{jm})\Big)e^{\mathrm{i}\varrho\varphi}\\
&-\mathrm{i}\varrho c^{-2}\Gamma_{im}^ng^{mj}(\lambda a_k\varphi_{;\ell}g^{k\ell}g_{nj}+\mu a_n\varphi_{;j}+\mu a_j\varphi_{;n})e^{\mathrm{i}\varrho\varphi}\\
&-\mathrm{i}\varrho c^{-2}\Gamma_{jm}^ng^{mj}(\lambda a_k\varphi_{;\ell}g^{k\ell}g_{ni}+\mu a_n\varphi_{;i}+\mu a_i\varphi_{;n})e^{\mathrm{i}\varrho\varphi}+\mathcal{O}(1).
\end{split}
\]
We also calculate
\[
\begin{split}
\partial^2_t(u_\varrho)_i=&-\varrho^2(\partial_t\varphi)^2 a_i e^{\mathrm{i}\varrho\varphi}-\varrho(\partial_t\varphi)^2 b_ie^{\mathrm{i}\varrho\varphi}+\mathrm{i}\varrho\partial_t^2\varphi a_ie^{\mathrm{i}\varrho\varphi}+2\mathrm{i}\varrho(\partial_t\varphi)\partial_t a_ie^{\mathrm{i}\varrho\varphi}+\mathcal{O}(1).
\end{split}
\]
~\\

In a neighborhood of $\vartheta$, we can write
\begin{equation}\label{eq_phase}
\rho\partial_t^2 u_\varrho-\nabla\cdot S^L(u_\varrho)=e^{\mathrm{i}\varrho\varphi}\left(\varrho^2\mathcal{I}_1+\sum_{k=0}^N\varrho^{1-k}\mathcal{I}_{k+2}+\mathcal{O}(\varrho^{-N})\right),
\end{equation}
where
\[
\mathcal{I}_1=-\rho(\partial_t\varphi)^2\mathbf{a}+(\lambda+\mu)\langle\mathbf{a},\nabla\varphi\rangle\nabla\varphi+\mu|\nabla\varphi|^2\mathbf{a},
\]
or component-wisely
\[
(\mathcal{I}_1)_i=-\rho(\partial_t\varphi)^2a_i+\lambda a_j\varphi_{;\ell}g^{j\ell}c^{-2}\varphi_{;i}+\mu a_i\varphi_{;j}\varphi_{;\ell}c^{-2}g^{j\ell}+\mu a_j\varphi_{;\ell}g^{j\ell}c_P^{-2}\varphi_{;i},
\]
and
\[
\begin{split}
(\mathcal{I}_2)_i=&\rho(\partial^2_t\varphi)a_i+2\rho\partial_t\varphi\partial_ta_i+\mathrm{i}\rho(\partial_t\varphi)^2b_i\\
&- c_P^{-2}\left(\partial_i(\lambda a_k\varphi_{;\ell}g^{k\ell})+\lambda a_k\varphi_{;\ell}g^{k\ell}\partial_mg_{ij}g^{jm}+\partial_m(\mu a_i\varphi_{;j}+\mu a_j\varphi_{;i})g^{jm}\right)\\
&-c_P^{-2}\left(\lambda a_{k;\ell}g^{k\ell}\varphi_{;i}+\mu(a_{i;j}+a_{j;i})\varphi_{;m}g^{jm}+\mathrm{i}\lambda b_k\varphi_{;\ell}g^{k\ell}\varphi_{;i}+\mathrm{i}\mu(b_i\varphi_{;j}\varphi_{;m}g^{jm}+\varphi_{;i}b_j\varphi_{;m}g^{jm})\right)\\
&+\Gamma^n_{im}g^{mj}c_P^{-2}(\lambda a_k\varphi_{;\ell}g^{k\ell}g_{nj}+\mu a_n\varphi_{;j}+\mu a_j\varphi_{;n})\\
&+\Gamma^n_{jm}g^{mj}c_P^{-2}(\lambda a_k\varphi_{;\ell}g^{k\ell}g_{ni}+\mu a_n\varphi_{;i}+\mu a_i\varphi_{;n}).
\end{split}
\]
%\[
%\nabla\cdot S^L(u_\varrho)=-e^{\mathrm{i}\varrho\varphi}[\varrho^2\left((\lambda+\mu)\langle\mathbf{a},\nabla\varphi\rangle\nabla\varphi+\mu|\nabla\varphi|^2\mathbf{a}\right)+\mathcal{O}(\varrho)],
%\]
%and
%\begin{equation}\label{eq_phase}
%\rho\partial_t^2 u_\varrho-\nabla\cdot S^L(u_\varrho)=e^{\mathrm{i}\varrho\varphi}[\varrho^2\left(-\rho(\partial_t\varphi)^2\mathbf{a}+(\lambda+\mu)\langle\mathbf{a},\nabla\varphi\rangle\nabla\varphi+\mu|\nabla\varphi|^2\mathbf{a}\right)+\mathcal{O}(\varrho)].
%\end{equation}

We will construct the phase function $\varphi$ and the amplitude $\mathfrak{a}$ such that
\begin{equation}\label{I1}
\frac{\partial^\Theta}{\partial z^\Theta}\mathcal{I}_k=0\text{ on } \vartheta
\end{equation}
for $\Theta=(0,\Theta_1,\Theta_2,\Theta_3)$ with $|\Theta|\leq N$ and $k=1,2,\cdots, N+2$. The detailed construction will be given later.

Assume that \eqref{I1} is satisfied, we construct the remainder term $R_\varrho$. We let $R_\varrho$ be the solution to the following initial boundary value problem
\begin{equation}\label{eq_remainder}
\begin{split}
&\rho\frac{\partial^2R_\varrho}{\partial t^2}-\nabla\cdot S^L(x,R_\varrho)=F_\varrho,\quad (t,x)\in (0,T)\times\Omega,\\
&R_\varrho=0,\quad \text{on } (0,T)\times\partial \Omega,\\
&R_\varrho(0,x)=\frac{\partial}{\partial t}R_\varrho(0,x)=0,\quad x\in \Omega.
\end{split}
\end{equation}
Here
\[
F_\varrho=-\rho\partial_t^2 u_\varrho+\nabla \cdot S^L(u_\varrho),
\]
in a neighborhood of $\vartheta$. 
%\[
%\mathcal{S}\varphi=-\rho|\partial_t\varphi|^2+(\lambda+2\mu)|\nabla\varphi|^2.
%\]
%Here 
By \eqref{I1} and \cite[Lemma 2]{feizmohammadi2019recovery}, we have
\[
\|F_\varrho\|_{H^k(M)}\leq C\varrho^{-K},
\]
where $K=\frac{N+1-k}{2}+1$.

By $L_2$ estimates for second order hyperbolic equation, we have
\[
\|R_\varrho\|_{H^{k+1}(M)}\leq C\|F_\varrho\|_{H^k(M)}.
\]
We can take $N$ large enough and use Sobolev imbedding to obtain
\begin{equation}\label{R_estimate}
\|R_\varrho\|_{W^{1,3}(M)}=\mathcal{O}(\varrho^{-1/2}).
\end{equation}

We remark here that $u=u_\varrho+R_\varrho$ solves the equation \eqref{elastic_linear_eq} with
\[
f=u_\varrho\vert_{[0,T]\times\partial\Omega}.
\]
~\\

%If $\mu|\nabla\varphi|^2=\rho(\partial_t\varphi)^2$, then \eqref{eq_phase} implies
%\begin{equation}\label{aphi}
%\langle \mathbf{a},\nabla\varphi\rangle=0.
%\end{equation}
%The type of solutions represent \textit{S}-waves. If $\mu|\nabla\varphi|^2\neq\rho(\partial_t\varphi)^2$, we can take $(\lambda+2\mu)|\nabla \varphi|^2=\rho(\partial_t\varphi)^2$, and
% This type of solutions represent \textit{P}-waves.\\

\subsection{Construction of the phase}
We will construct phase function $\varphi=\varphi_{P/S}$ such that
\[
\mathcal{S}\varphi_P=(\lambda+2\mu)|\nabla\varphi_P|^2-\rho(\partial_t\varphi_P)^2\label{varphi_P},
\]
or
\[
\mathcal{S}\varphi_S=\mu|\nabla\varphi_S|^2-\rho(\partial_t\varphi_S)^2\label{varphi_S}
\]
vanishes on $\vartheta$ up to order $N$. In terms of Fermi coordinates $z=(z^0=\tau,z^1,z^2,z^3)$ for $\overline{g}_{P/S}=-\mathrm{d}t^2+g_{P/S}$ we need
\begin{equation}\label{S_cond}
\frac{\partial^\Theta}{\partial z^\Theta}(\mathcal{S}\varphi_{P/S})(\tau,0)=0
\end{equation}
for $\Theta=(0,\Theta_1,\Theta_2,\Theta_3)$ with $|\Theta|\leq N$.

Notice that \eqref{S_cond} is equivalent to
\[
\frac{\partial^\Theta}{\partial z^\Theta}\langle\mathrm{d}\varphi_\bullet,\mathrm{d}\varphi_\bullet\rangle_{\overline{g}_\bullet}\Big\vert_{\vartheta}=0.
\]
Thus the phase function $\varphi$ can be constructed as in \cite{feizmohammadi2019recovery} of the form
\[
\varphi=\sum_{k=0}^N\varphi_k(\tau,z').
\]
Here for each $k$, $\varphi_k$ is a complex valued homogeneous polynomial of degree $k$ with respect to the variables $z^i$, $i=1,2,3$. In this paper, we will use the explicit forms of $\varphi_0,\varphi_1,\varphi_2$, which will be constructed below. Following the lines in \cite{feizmohammadi2019timedependent}, one can take
\[
\varphi_0=0,\quad \varphi_1=r=\frac{-t+t_0+s}{\sqrt{2}},
\]
and
\[
\varphi_2(\tau,z')=\sum_{1\leq i,j\leq 3}H_{ij}(\tau)z^iz^j.
\]
Here $H$ is a symmetric matrix with $\Im H(\tau)>0$.

The matrix $H$ satisfies a Ricatti type ODE,
\begin{equation}\label{Ricatti}
\frac{\mathrm{d}}{\mathrm{d}\tau}H+HCH+D=0,\tau\in (\tau_--\frac{\epsilon}{2},\tau_++\frac{\epsilon}{2}),\quad H(0)=H_0,\text{ with }\Im H_0>0,
\end{equation}
where $C$, $D$ are matrices with $C_{11}=0$, $C_{ii}=2$, $i=2,3$, $C_{ij}=0$, $i\neq j$ and $D_{ij}=\frac{1}{4}(\partial_{ij}^2g^{11})$.
 
 \begin{lemma}[\text{\cite[Lemma 3.2]{feizmohammadi2019timedependent}}]
 The Ricatti equation \eqref{Ricatti} has a unique solution. Moreover the solution $H$ is symmetric and $\Im (H(\tau))>0$ for all $\tau\in (\tau_--\frac{\delta}{2},\tau_++\frac{\delta}{2})$. For solving the above Ricatti equation, one has
 \[
 H(\tau)=Z(\tau)Y(\tau)^{-1},
 \]
 where $Y(\tau)$ and $Z(\tau)$ solve the ODEs
 \[
 \begin{split}
& \frac{\mathrm{d}}{\mathrm{d}\tau}Y(\tau)=CZ(\tau),\quad Y(0)=Y_0,\\
&\frac{\mathrm{d}}{\mathrm{d}\tau}Z(\tau)=-D(\tau)Y(\tau),\quad Z(0)=Y_1=H_0Y_0.
 \end{split}
 \]
 In addition, $Y(\tau)$ is non-degenerate.
 \end{lemma}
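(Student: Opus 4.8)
The plan is to linearize the matrix Riccati equation \eqref{Ricatti} via the substitution $H=ZY^{-1}$ and then extract all four assertions from conserved bilinear quantities of the resulting linear system. First I would note that the system for $(Y,Z)$ has smooth coefficients (the matrix $C$ is constant and $D(\tau)$ is smooth on the closed interval), so by the standard theory of linear ODEs it has a unique solution $(Y,Z)$ on all of $(\tau_--\frac{\epsilon}{2},\tau_++\frac{\epsilon}{2})$. Differentiating $H=ZY^{-1}$ at any point where $Y$ is invertible gives $\dot H=\dot Z Y^{-1}-ZY^{-1}\dot Y Y^{-1}=-DYY^{-1}-ZY^{-1}CZY^{-1}=-D-HCH$, so $H=ZY^{-1}$ solves \eqref{Ricatti}; with $Y_0$ chosen invertible (e.g.\ $Y_0=I$) one has $H(0)=Z(0)Y(0)^{-1}=H_0Y_0Y_0^{-1}=H_0$. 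Uniqueness of the Riccati solution is then immediate, since its right-hand side $-HCH-D$ is polynomial, hence locally Lipschitz, in $H$.

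Next I would prove symmetry of $H$ by exhibiting a conserved complex bilinear form. Setting $W(\tau)=Y^TZ-Z^TY$ and using $\dot Y=CZ$, $\dot Z=-DY$ together with the fact that $C$ and $D$ are real and symmetric, a direct computation yields $\dot W=(CZ)^TZ-Y^TDY-(-DY)^TY-Z^TCZ=Z^TCZ-Y^TDY+Y^TDY-Z^TCZ=0$, so $W$ is constant. At $\tau=0$ one gets $W(0)=Y_0^TH_0Y_0-(H_0Y_0)^TY_0=Y_0^T(H_0-H_0^T)Y_0=0$ because $H_0$ is symmetric. Hence $Y^TZ=Z^TY$ throughout, which is precisely $ZY^{-1}=(Y^{-1})^TZ^T$, i.e.\ $H=H^T$, once $Y$ is known to be invertible.

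The heart of the argument, and the step I expect to be the main obstacle, is the simultaneous proof that $Y(\tau)$ is non-degenerate and that $\Im H(\tau)>0$ for every $\tau$. The device is the Hermitian form $\mathcal{M}(\tau)=\frac{1}{2\mathrm{i}}(\bar Y^TZ-\bar Z^TY)$. The same type of computation, now tracking conjugates and again using that $C,D$ are real symmetric, shows $\frac{\mathrm{d}}{\mathrm{d}\tau}(\bar Y^TZ-\bar Z^TY)=0$, so $\mathcal{M}$ is constant; evaluating at $\tau=0$ gives $\mathcal{M}=\bar Y_0^T(\Im H_0)Y_0>0$. To see that $Y(\tau)$ cannot be singular, I would suppose $Y(\tau_*)v=0$ for some $v\neq 0$; then $\bar Y(\tau_*)\bar v=0$ as well, so in $v^*\mathcal{M}v$ the term $v^*\bar Z^TYv$ vanishes through $Yv=0$ and the term $v^*\bar Y^TZv$ vanishes through $\bar Y\bar v=0$, giving $v^*\mathcal{M}v=0$ and contradicting $\mathcal{M}>0$. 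Therefore $Y(\tau)$ is invertible on the whole interval, which both secures global existence of $H=ZY^{-1}$ and, with the previous paragraph, its symmetry. Finally, substituting $Z=HY$ and using $H=H^T$, the conserved form becomes $\mathcal{M}=\bar Y^T(\Im H)Y$, so $\Im H=(Y^{-1})^*(\Im H_0)Y^{-1}>0$ for all $\tau$, which completes the proof. The only genuinely delicate point is the non-degeneracy/positivity argument; the existence, uniqueness, and symmetry claims are routine consequences of linear ODE theory and the conserved quantity $W$.
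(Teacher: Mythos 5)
Your proof is correct, and it is the standard argument for this lemma. Note that the paper itself gives no proof at all: the lemma is imported verbatim (as Lemma 3.2) from the cited reference \cite{feizmohammadi2019timedependent}, and the argument given there (going back to the Gaussian-beam literature, e.g.\ Katchalov--Kurylev--Lassas) is essentially the one you wrote: linearize via $H=ZY^{-1}$, get local uniqueness from the locally Lipschitz right-hand side, and control symmetry, non-degeneracy of $Y$, and positivity of $\Im H$ through the two conserved Wronskian-type forms $Y^{T}Z-Z^{T}Y$ and $\bar{Y}^{T}Z-\bar{Z}^{T}Y$. The only cosmetic caveat is that your final formula $\Im H=(Y^{-1})^{*}(\Im H_{0})Y^{-1}$ presumes the normalization $Y_{0}=I$; for general invertible $Y_{0}$ it reads $\Im H=(Y^{-1})^{*}Y_{0}^{*}(\Im H_{0})Y_{0}Y^{-1}$, which is still positive definite, so nothing is lost.
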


 \begin{lemma}[\text{\cite[Lemma 3.3]{feizmohammadi2019timedependent}}]\label{lemma_H0}
 The following identity holds:
\[
 \det(\Im(H(\tau))|\det(Y(\tau))|^2=c_0
\]
 with $c_0$ independent of $\tau$.
 \end{lemma}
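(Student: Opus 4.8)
\emph{Proof proposal.} The plan is to exhibit the quantity $\det(\Im H(\tau))\,|\det Y(\tau)|^2$ as a constant multiple of the determinant of a conserved bilinear expression in the fundamental matrices $Y$ and $Z$. Throughout I write $A^*=\overline{A}^{\mathrm T}$ for the conjugate transpose, and I use from the previous lemma that $Y(\tau)$ is invertible, that $H=ZY^{-1}$, and that $H$ is symmetric, so that $\overline H=H^*$.

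First I would conjugate the target quantity by $Y$. Since $\det\!\big(Y^*(\Im H)Y\big)=\det(\Im H)\,|\det Y|^2$, it suffices to evaluate $Y^*(\Im H)Y$. Using $\Im H=\tfrac{1}{2\mathrm i}(H-\overline H)=\tfrac{1}{2\mathrm i}(H-H^*)$ together with $H=ZY^{-1}$ and $H^*=(Y^*)^{-1}Z^*$, and the cancellations $Y^*HY=Y^*Z$ and $Y^*H^*Y=Z^*Y$, one obtains
\[
Y^*(\Im H)Y=\frac{1}{2\mathrm i}\big(Y^*Z-Z^*Y\big)=:\frac{1}{2\mathrm i}W(\tau).
\]
Thus the lemma reduces to showing that $W(\tau)=Y^*Z-Z^*Y$ is independent of $\tau$: once that is known, $\det(\Im H)\,|\det Y|^2=\det\!\big(\tfrac{1}{2\mathrm i}W\big)$ is constant, and I set $c_0$ equal to this value.

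The conservation of $W$ is where the structure of the defining ODEs enters. Differentiating and substituting $\dot Y=CZ$, $\dot Z=-DY$, I would compute
\[
\tfrac{\mathrm d}{\mathrm d\tau}(Y^*Z)=Z^*C^*Z-Y^*DY,\qquad \tfrac{\mathrm d}{\mathrm d\tau}(Z^*Y)=-Y^*D^*Y+Z^*CZ,
\]
so that $\dot W=Z^*(C^*-C)Z+Y^*(D^*-D)Y$. The coefficient matrices are \emph{real and symmetric}: $C$ is the real diagonal matrix with entries $(0,2,2)$, and $D_{ij}=\tfrac14\partial^2_{ij}g^{11}$ is real symmetric. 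Hence $C^*=C$ and $D^*=D$, both remaining terms vanish, and $\dot W\equiv 0$.

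The argument is in essence a conserved-Wronskian computation, so I do not expect a genuine obstacle. The only points requiring care are the reduction $Y^*(\Im H)Y=\tfrac{1}{2\mathrm i}W$, which relies on the symmetry $H=H^{\mathrm T}$ proved in the previous lemma (this is exactly what lets the conjugate transpose be replaced by the complex conjugate in $\Im H$), and the bookkeeping of conjugate transposes in $\dot W$, where the reality and symmetry of $C$ and $D$ is precisely the input forcing the two leftover terms to cancel. One also notes that $\det(\Im H)>0$ since $\Im H>0$, which is consistent with $c_0\neq 0$.
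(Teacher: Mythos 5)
Your proof is correct. Note that the paper itself gives no argument for this lemma at all; it is imported verbatim by citation from \cite{feizmohammadi2019timedependent}, and your conserved-Wronskian computation (showing $W=Y^*Z-Z^*Y$ is constant via $\dot Y=CZ$, $\dot Z=-DY$, the symmetry of $H$, and the reality and symmetry of $C$ and $D$, then identifying $Y^*(\Im H)Y=\tfrac{1}{2\mathrm i}W$) is precisely the standard proof of this identity in the Gaussian-beam literature, so there is nothing to flag.
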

  We see that the matrix $Y(\tau)$ satisfies
  \begin{equation}\label{eq_Y}
  \frac{\mathrm{d}^2}{\mathrm{d}\tau^2}Y+CD Y=0,\quad Y(0)=Y_0,\quad \frac{\mathrm{d}}{\mathrm{d}\tau}Y(0)=CY_1.
  \end{equation}

\subsection{Construction of the amplitude for \textit{P}-waves}
We consider the Lorentzian manifold $(M,-\mathrm{d}t^2+g_P)$ and $\vartheta$ is a null-geodesic in it. For \textit{P}-waves, the polarization vector $\mathbf{a}$ should be in parallel with the wave vector $\nabla\varphi$ on $\vartheta$. Denote $\varphi=\varphi_P$ and take
\begin{equation}\label{a_form}
\mathbf{a}=A_P\nabla\varphi.
\end{equation}
Component-wisely, the gradient of $\varphi$ has the form
\[
 \varphi_{\,;\,1}\vert_\vartheta=\frac{1}{\sqrt{2}}, \quad  \varphi_{\,;\,\alpha}\vert_\vartheta=0.
\]
By \eqref{a_form}, we have
\[
a_1\vert_\vartheta=\frac{1}{\sqrt{2}}A_P,\quad a_\alpha\vert_\vartheta=0,
\]
and
\[
\mathcal{I}_1=(-\rho(\partial_t \varphi)^2+(\lambda+2\mu)|\nabla\varphi|^2)A_P\nabla\varphi.
\]
By the construction of the phase function $\varphi$, we have \eqref{I1} satisfied for $k=1$.\\
%\[
%\frac{\partial^\Theta}{\partial z^\Theta}\mathcal{I}_1=0
%\]
%on $\vartheta$ for $\Theta=(0,\Theta_1,\Theta_2,\Theta_3)$ with $|\Theta|\leq 3$.
%\[
%\begin{split}
%\sigma_{11;\,1}=&\partial_1\sigma_{11}-2\Gamma_{11}^k\sigma_{1k}\\
%=&-\varrho^2(\lambda+2\mu)c_PA_Pe^{\mathrm{i}\varrho\varphi}+\mathrm{i}\varrho\frac{\partial}{\partial s}[(\lambda+2\mu)c_PA_P]e^{\mathrm{i}\varrho\varphi}-2\mathrm{i}\varrho(\lambda+2\mu)A_P\frac{\partial c_P}{\partial s}e^{\mathrm{i}\varrho\varphi}\\
%&\quad+\mathrm{i}\varrho[\lambda a_{\gamma;}^{~\,~\gamma}c_P^2+(\lambda+2\mu)(a_{1;\,1}+\mathrm{i}b_1)]e^{\mathrm{i}\varrho\varphi}+\mathcal{O}(1)\\
%=&-\varrho^2(\lambda+2\mu)c_PA_Pe^{\mathrm{i}\varrho\varphi}+\mathrm{i}\varrho\frac{\partial}{\partial s}[(\lambda+2\mu)c_PA_P]e^{\mathrm{i}\varrho\varphi}-2\mathrm{i}\varrho(\lambda+2\mu)A_P\frac{\partial c_P}{\partial s}e^{\mathrm{i}\varrho\varphi}\\
%&\quad+\mathrm{i}\varrho\left[\frac{1}{2}\lambda c_PA_P\frac{\partial g_{\alpha\beta}}{\partial s}g^{\alpha\beta}+(\lambda+2\mu)c_P\frac{\partial A_P}{\partial s}+\mathrm{i}(\lambda+2\mu)b_1)\right]e^{\mathrm{i}\varrho\varphi}+\mathcal{O}(1)
%\end{split}
%\]
%We also need
%\begin{equation}\label{I2}
%\frac{\partial^\Theta}{\partial z^\Theta}\mathcal{I}_2=0
%\end{equation}
%on $\vartheta$ for $\Theta=(0,\Theta_1,\Theta_2,\Theta_3)$ with $|\Theta|\leq 1$

Next, we proceed to construct $A_P$. Let us first consider the equation \eqref{I1} with $k=2$, $\Theta=0$ and $i=1$. On $\vartheta$, we calculate
\begin{eqnarray*}
&&\rho(\partial^2_t\varphi)a_1=\frac{1}{\sqrt{2}}\rho (\partial^2_t\varphi)A_P,\\
&&2\rho(\partial_t \varphi)(\partial_t a_1)=-\sqrt{2}\rho\left(\frac{1}{\sqrt{2}}\partial_t A_P+A_P\frac{\partial^2\varphi}{\partial s\partial t}\right),\\
&&\mathrm{i}\rho(\partial_t\varphi)^2b_i=\frac{1}{2}\mathrm{i}\rho b_1,\\
&&\partial_1(\lambda a_k\varphi_{;\ell}g^{k\ell})=\frac{1}{2}\partial_s(\lambda A_P)+\sqrt{2}\lambda A_P\frac{\partial^2\varphi}{\partial s^2},\\
&&\lambda a_k\varphi_{;\ell}g^{k\ell}\partial_mg_{1j}g^{jm}=0,\\
&&\partial_m(\mu a_1\varphi_{;j}+\mu a_j\varphi_{;1})g^{jm}=\partial_s(\mu A_P)+2\sqrt{2}\mu A_P\frac{\partial^2\varphi}{\partial s^2}+\sqrt{2}\mu A_P\sum_{\alpha=2}^3\frac{\partial^2\varphi}{\partial y^\alpha\partial y^\alpha},\\
&&\lambda a_{k;\ell}g^{k\ell}\varphi_{;1}=\lambda\left(\frac{1}{\sqrt{2}}A_P\frac{\partial^2\varphi}{\partial s^2}+\frac{1}{2}\partial_sA_P+\frac{1}{2}A_Pc_P^{-1}\frac{\partial c_P}{\partial s}+\frac{1}{\sqrt{2}}A_P\sum_{\alpha=2}^3\frac{\partial^2\varphi}{\partial y^\alpha\partial y^\alpha}\right),\\
&&\mu(a_{1;j}+a_{j;1})\varphi_{;m}g^{jm}=\mu(\partial_sA_P+\sqrt{2}A_P\frac{\partial^2\varphi}{\partial s^2}-A_Pc_P\frac{\partial c_P}{\partial s}),\\
&&\mathrm{i}\lambda b_k\varphi_{;\ell}g^{k\ell}\varphi_{;1}+\mathrm{i}\mu(b_1\varphi_{;j}\varphi_{;m}g^{jm}+\varphi_{;1}b_j\varphi_{;m}g^{jm})=\frac{1}{2}\mathrm{i}(\lambda+2\mu)b_1,\\
&&\Gamma^n_{1m}g^{mj}(\lambda a_k\varphi_{;\ell}g^{k\ell}g_{nj}+\mu a_n\varphi_{;j}+\mu a_j\varphi_{;n})=(\frac{3}{2}\lambda+\mu)c_P^{-1}\frac{\partial c_P}{\partial s} A_P,\\
&&\Gamma^n_{jm}g^{mj}(\lambda a_k\varphi_{;\ell}g^{k\ell}g_{ni}+\mu a_n\varphi_{;i}+\mu a_i\varphi_{;n})=-(\frac{1}{2}\lambda+\mu)c_P^{-1}\frac{\partial c_P}{\partial s} A_P.
\end{eqnarray*}

Then we obtain the following equation on $\vartheta$:
\[
\begin{split}
&\frac{1}{\sqrt{2}}\left(\rho \partial^2_t\varphi-c_P^{-2}(\lambda+2\mu)\partial_s^2\varphi-c_P^{-2}(\lambda+2\mu)\sum_{\alpha=2}^3\frac{\partial^2\varphi}{\partial y^\alpha\partial y^\alpha}\right)A_P\\
-&\sqrt{2}c_P^{-2}(\lambda+2\mu)A_P\frac{\partial^2\varphi}{\partial s^2}-\sqrt{2}\rho A_P\frac{\partial^2\varphi}{\partial s\partial t}-c_P^{-2}(\lambda+2\mu)\partial_sA_P-\rho\partial_t A_P\\
+&\frac{1}{2}(\lambda+2\mu)c_P^{-3}\frac{\partial c_P}{\partial s}A_P-\frac{1}{2}c_P^{-2}\partial_s(\lambda+2\mu)A_P\\
+&\mathrm{i}\frac{1}{2}b_1[\rho-c_P^{-2}(\lambda+2\mu)]=0.
\end{split}
\]
Since $c_P^{-2}(\lambda+2\mu)=\rho$, $b_1$ can not be determined at this step, but will be determined from lower order asymptotics. Notice that on $\vartheta$,
\begin{equation}\label{YP}
\begin{split}
&\rho \partial^2_t\varphi-c_P^{-2}(\lambda+2\mu)\partial_s^2\varphi-c_P^{-2}(\lambda+2\mu)\sum_{\alpha=2}^3\frac{\partial^2\varphi}{\partial y^\alpha\partial y^\alpha}\\
=&\rho\left(\partial^2_t\varphi-\partial_s^2\varphi-\sum_{\alpha=2}^3\frac{\partial^2\varphi}{\partial y^\alpha\partial y^\alpha}\right)\\
=&\rho \square_{\overline{g}}\varphi\\
=&-\rho\sum_{\alpha=2}^3\frac{\partial^2\varphi}{\partial y^\alpha\partial y^\alpha}\\
=&-\rho\mathrm{Tr}(CH)=-\rho\frac{\partial}{\partial \tau}\log(\det(Y_P(\tau))),
\end{split}
\end{equation}
and, using $\frac{\partial}{\partial s}+\frac{\partial}{\partial t}=\sqrt{2}\frac{\partial}{\partial\tau}$,
\[
\begin{split}
&-\sqrt{2}c_P^{-2}(\lambda+2\mu)A_P\frac{\partial^2\varphi}{\partial s^2}-\sqrt{2}\rho A_P\frac{\partial^2\varphi}{\partial s\partial t}\\
=&-\sqrt{2}\rho A_P\left(\frac{\partial^2\varphi}{\partial s^2}+\frac{\partial^2\varphi}{\partial s\partial t}\right)\\
=&-2\rho A_P\frac{\partial}{\partial\tau}(\partial_s\varphi)\\
=&0.
\end{split}
\]

Then we arrive at the transport equation for the amplitude $A_P$ on $\vartheta$,
\begin{equation}\label{eq_AP}
2\frac{\partial A_P}{\partial \tau}+\left[\frac{1}{\lambda+2\mu}\frac{\partial (\lambda+2\mu)}{\partial \tau}-c_P^{-1}\frac{\partial c_P}{\partial \tau}+\frac{1}{\det(Y_P)}\frac{\partial \det(Y_P)}{\partial \tau}\right]A_P=0,
\end{equation}
or equivalently
\[
\frac{\partial}{\partial \tau}\ln \left[A_P^2\det(Y_P)c_P^{-1}(\lambda+2\mu)\right]=0.
\]
Then we can take
\[
A_P(\tau)=c\det(Y_P(\tau))^{-1/2}c_P(\tau,0)^{-1/2}\rho(\tau,0)^{-1/2},
\]
with some constant $c$.\\

Next we consider \eqref{I1} for $k=2$ and $i=\alpha=2,3$ and obtain an equation for $b_\alpha$, $\alpha=2,3$ on  the null geodesic $\vartheta$
\begin{equation}\label{balpha}
\mathrm{i}(\rho-c_P^{-2}\mu)b_\alpha-c_P^{-2}(\lambda+2\mu)\frac{\partial A_P}{\partial z^\alpha}+\mathfrak{F}(\varphi,A_P\vert_\vartheta)=0.
\end{equation}
%\[
%(\lambda+\mu) c_P\frac{\partial  A_P}{\partial y^\alpha}+\mathrm{i}\mu c_P^2 b_\alpha+c_P\frac{\partial\lambda}{\partial y^\alpha}A_P-\frac{\partial c_P}{\partial y^\alpha}(\lambda+3\mu)A_P=0.
%\]
We can get an expression for $b_\alpha$ on $\vartheta$.

Substituting the expression for $b_\alpha$ into the equation \eqref{I1} with $k=2$ and $|\Theta|=1$ and $i=1$, we end up with a transport equation for $\frac{\partial^\Theta}{\partial z^\Theta}A_P$ on $\vartheta$, from which we can determine the value of $\frac{\partial^\Theta}{\partial z^\Theta}A_P$. Then using again \eqref{balpha}, we can determine $b_\alpha$. Finally, the equation \eqref{I1} with $k=2$ and $|\Theta|=1$ and $\alpha=2,3$ gives us the value of $\frac{\partial^\Theta}{\partial z^\Theta}b_\alpha$ on $\vartheta$. Continuing with this process, we can have \eqref{I1} satisfied with $k=2$ and $|\Theta|\leq N$. 

The lower order terms $\mathbf{a}_k$, $k=1,2,\cdots, N+1$ in the amplitude $\mathfrak{a}$ can constructed as in \cite{feizmohammadi2019recovery} such that \eqref{I1} is satisfied for all $k=2,\cdots, N+2$ and $|\Theta|\leq N$. We note here that $(\mathbf{a}_{N+1})_1$ can take any value, while $(\mathbf{a}_{N+1})_\alpha$, $\alpha=2,3$ need to be determined.

\subsection{Construction of the amplitude for \textit{S}-waves}
Let us now consider a null geodesic $\vartheta$ in the Lorentzian manifold $(M,-\mathrm{d}t^2+g_S)$. Denote $\varphi=\varphi_S$. In the Fermi coordinates,
\[
 \varphi_{\,;\,1}\vert_\vartheta=\frac{1}{\sqrt{2}},\quad \varphi_{\,;\,\alpha}\vert_\vartheta=0,\text{ for }\alpha=2,3.
\]
Now
\[
\mathcal{I}_1=(\mathcal{S}\varphi)\mathbf{a}+(\lambda+\mu)\langle\mathbf{a},\nabla\varphi\rangle\nabla\varphi.
\]
For \textit{S}-waves, the polarization vector $\mathbf{a}$ should be perpendicular to the wave vector $\nabla\varphi$ on $\vartheta$.
In order for \eqref{I1} to hold, we also need
\begin{equation}\label{eq_a}
\frac{\partial^{\Theta}}{\partial z^{\Theta}}\langle \mathbf{a},\nabla\varphi\rangle\vert_\vartheta=0, 
\end{equation}
for $\Theta=(0,\Theta_1,\Theta_2,\Theta_3)$ with $|\Theta|\leq N$. For this we take 
\[
\mathbf{a}=A_S\mathbf{e}
\]
with $\mathbf{e}=(e_1,e_2,e_3)$ satisfying $|\mathbf{e}|=1$ on $\vartheta$ and
\begin{equation}\label{eq_e}
\frac{\partial^{\Theta}}{\partial z^{\Theta}}\langle \mathbf{e},\nabla\varphi\rangle\vert_\vartheta=0.
\end{equation}

First we construct such vector $\mathbf{e}$. Without loss of generality, we can fix an $\alpha\in\{2,3\}$ and let
\[
e_1\vert_\vartheta,\quad e_\alpha\vert_\vartheta=1,\quad e_{\alpha'}\vert_\vartheta=0,\text{ for }\alpha'\neq \alpha.
\]
 The equation \eqref{eq_e}  with $|\Theta|=1$ implies
\begin{equation}\label{eq_e1}
\frac{1}{\sqrt{2}}\frac{\partial e_1}{\partial z^k}+\frac{\partial^2\varphi}{\partial z^k\partial z^\alpha}=0,\text{ on }\vartheta
\end{equation}
for $k\in\{1,2,3\}$. Successively we can obtain, from \eqref{eq_e} for $|\Theta|\geq 2$, equations for $\frac{\partial^\Theta e_1}{\partial z^\Theta}$ that will be determined. From now on, we just assume $\mathbf{e}$ has already been chosen.

We first consider \eqref{I1} for $k=2$ and $i=\alpha$. We calculate
\begin{eqnarray*}
&&\rho(\partial^2_t\varphi)a_\alpha=\rho (\partial^2_t\varphi)A_S,\\
&&2\rho(\partial_t \varphi)(\partial_t a_\alpha)=-\sqrt{2}\rho\partial_tA_S-\sqrt{2}\rho A_S\frac{\partial e_\alpha}{\partial t},\\
&&\mathrm{i}\rho(\partial_t\varphi)^2b_\alpha=\frac{1}{2}\mathrm{i}\rho b_\alpha,\\
&&\partial_\alpha(\lambda a_k\varphi_{;\ell}g^{k\ell})=\lambda A_S\frac{\partial^2\varphi}{\partial y^\alpha\partial y^\alpha}+\frac{1}{\sqrt{2}}\lambda A_S\frac{\partial e_1}{\partial y^\alpha},\\
&&\lambda a_k\varphi_{;\ell}g^{k\ell}\partial_mg_{\alpha j}g^{jm}=0\\
&&\partial_m(\mu a_\alpha\varphi_{;j}+\mu a_j\varphi_{;\alpha})g^{jm}=\frac{1}{\sqrt{2}}\partial_s(\mu A_S)+\frac{1}{\sqrt{2}}\mu A_S\frac{\partial e_\alpha}{\partial s}+\mu A_S\left(\frac{\partial^2\varphi}{\partial s^2}+\frac{\partial^2\varphi}{\partial y^\alpha\partial y^\alpha}+\sum_{\beta=2}^3\frac{\partial^2\varphi}{\partial y^\beta\partial y^\beta}\right),\\
&&\lambda a_{k;\ell}g^{k\ell}\varphi_{;\alpha}=0,\\
&&\mu(a_{\alpha;j}+a_{j;\alpha})\varphi_{;m}g^{jm}=\frac{1}{\sqrt{2}}\mu\left(\frac{\partial A_S}{\partial s}+A_S\frac{\partial e_\alpha}{\partial s}+A_S\frac{\partial e_1}{\partial y^\alpha}-2c_S^{-1}\frac{\partial c_S}{\partial s}A_S\right),\\
&&\mathrm{i}\lambda b_k\varphi_{;\ell}g^{k\ell}\varphi_{;\alpha}+\mathrm{i}\mu(b_\alpha\varphi_{;j}\varphi_{;m}g^{jm}+\varphi_{;\alpha}b_j\varphi_{;m}g^{jm})=\frac{1}{2}\mathrm{i}\mu b_\alpha,\\
&&\Gamma^n_{\alpha m}g^{mj}(\lambda a_k\varphi_{;\ell}g^{k\ell}g_{nj}+\mu a_n\varphi_{;j}+\mu a_j\varphi_{;n})=0,\\
&&\Gamma^n_{jm}g^{mj}(\lambda a_k\varphi_{;\ell}g^{k\ell}g_{n\alpha}+\mu a_n\varphi_{;\alpha}+\mu a_\alpha\varphi_{;n})=-\frac{1}{\sqrt{2}}\mu c_S^{-1}\frac{\partial c_S}{\partial s} A_S.
\end{eqnarray*}
Then we obtain the following equation on $\vartheta$
\[
\begin{split}
&\left(\rho\partial_t^2\varphi-c_S^{-2}\mu\partial_s^2\varphi-c_S^{-2}\mu\sum_{\beta=2}^3\frac{\partial^2\varphi}{\partial y^\beta\partial y^\beta}\right)A_S\\
-&\sqrt{2}\left(\rho \frac{\partial e_\alpha}{\partial t}+c_s^{-2}\mu\frac{\partial e_\alpha}{\partial s}\right)A_S-\sqrt{2}(\rho\partial_tA_S+c_S^{-2}\mu\partial_sA_S)\\
+&\frac{1}{\sqrt{2}}\mu c_S^{-3}\frac{\partial c_S}{\partial s}A_S-\frac{1}{\sqrt{2}}c_S^{-2}\frac{\partial \mu}{\partial s}A_S+\frac{1}{2}\mathrm{i} b_\alpha(\rho-c_S^{-2}\mu)\\
-&c_S^{-2}(\lambda+\mu)A_S\left(\frac{1}{\sqrt{2}}\frac{\partial e_1}{\partial y^\alpha}+\frac{\partial^2\varphi}{\partial y^\alpha\partial y^\alpha}\right)=0.
\end{split}
\]
Since $\rho-c_S^{-2}\mu=0$, $b_\alpha$ can not be determined at this step. Notice
\[
\frac{1}{\sqrt{2}}\frac{\partial e_1}{\partial y^\alpha}+\frac{\partial^2\varphi}{\partial y^\alpha\partial y^\alpha}=0,
\]
on $\vartheta$ by setting $k=\alpha$ in \eqref{eq_e1} and
\[
\rho \frac{\partial e_\alpha}{\partial t}+c_s^{-2}\mu\frac{\partial e_\alpha}{\partial s}=\sqrt{2}\rho\frac{\partial e_\alpha}{\partial\tau}=0.
\]
Similar to \eqref{YP}, we have
\[
\rho\partial_t^2\varphi-c_S^{-2}\mu\partial_s^2\varphi-c_S^{-2}\mu\sum_{\beta=2}^3\frac{\partial^2\varphi}{\partial y^\beta\partial y^\beta}=-\rho\frac{\partial}{\partial \tau}\log(\det(Y_S(\tau))).
\]

Thus we end up with the following equation for the amplitude $A_S$ on $\vartheta$:
\begin{equation}\label{eq_AS}
2\frac{\partial A_S}{\partial \tau}+\left[\frac{1}{\mu}\frac{\partial \mu}{\partial \tau}-c_S^{-1}\frac{\partial c_S}{\partial \tau}+\frac{1}{\det(Y_S)}\frac{\partial \det(Y_S)}{\partial \tau}\right]A_S=0,
\end{equation}
The above equation is similar to the equation for $A_P$ \eqref{eq_AP}. Therefore we can take
\[
A_S(\tau)=c\det(Y_S(\tau))^{-1/2}c_S(\tau,0)^{-1/2}\rho(\tau,0)^{-1/2},
\]
with some constant $c$.\\

By calculation, we find that the equation $\mathcal{I}_2=0$ for $i=\alpha'$ always holds on $\vartheta$, with arbitrary choice of $b_{\alpha'}$. 

Next we consider the equation $(\mathcal{I}_2)_i=0$ for $i=1$.
We obtain the following equation on  the null geodesic $\vartheta$
\begin{equation}\label{b1}
\frac{1}{2}\mathrm{i}(\rho-c_S^{-2}(\lambda+2\mu))b_1-c_S^{-2}(\lambda+\mu)\frac{\partial A_S}{\partial z^\alpha}+\mathfrak{F}(\varphi,A_S\vert_\vartheta)=0.
\end{equation}
Substitute the expression for $b_1$ into the equation \eqref{I1} with $k=2$, $|\Theta|=1$ and $i=1$. We will end up with a transport equation for $\frac{\partial^\Theta}{\partial z^\Theta}A_S$ on $\vartheta$, from which we can determine the value of $\frac{\partial^\Theta}{\partial z^\Theta}A_S$. Then using again \eqref{b1}, we can determine $b_1$. Finally, the equation \eqref{I1} with $k=2$ and $|\Theta|=1$ gives us the value of $\frac{\partial^\Theta}{\partial z^\Theta}b_1$ on $\vartheta$.

As for \textit{P}-waves, we can construct lower order terms in the amplitude $\mathfrak{a}$ such that \eqref{I1} is satisfied.\\

\section{Proof of the main theorem}\label{main}
We will prove the uniqueness of $\mathscr{A},\,\mathscr{B},\,\mathscr{C}$ from the displacement-to-traction $\Lambda$ in this section. For the determination of $\mathscr{A},\,\mathscr{B}$, we will have a pointwise recovery in the interior $\Omega$.  With $\mathscr{A}$ and $\mathscr{B}$ determined, certain type of weighted ray transform of $\mathscr{C}$ (along any geodesic in $(\Omega,g_P)$) can be obtained from $\Lambda$. Under some geometric conditions, the weighted ray transform is invertible.
%
%At a point $p=(t_p,\gamma(t_p))$, we have
%\[
%\mathrm{d}\varphi=-\mathrm{d}t+\mathrm{d}s=-\mathrm{d}t+\nabla s\cdot\mathrm{d}x.
%\]
%We can see that
%\[
%-\mathrm{d}t+\nabla s\cdot\mathrm{d}x\in L^*_p(\mathbb{R}\times\Omega)
%\]
%
%Consider the two different Riemannian manifolds induced by the \textit{S}- and \textit{P}- wavespeeds respectively.

\subsection{Determination of $\mathscr{A}$ and $\mathscr{B}$}
 We introduce the notations
\[
\begin{split}
L^{S,*}_pM&=\{(\tau,\xi)\in T_p^*M,\, \tau^2=c_S^2|\xi|^2\},\\
L^{P,*}_pM&=\{(\tau,\xi)\in T_p^*M,\, \tau^2=c_P^2|\xi|^2\},
\end{split}
\]
for the \textit{S}-wave and \textit{P}-wave light cones at a point $p\in M$.
Let us start with a lemma used in \cite{de2018nonlinear}:
\begin{lemma}\label{xi_cc}
There exist nonzero $\zeta^{(2)},\zeta^{(0)}\in L^{S,*}_pM$, $\zeta^{(1)}\in L^{P,*}_pM$ such that $\zeta^{(0)}$, $\zeta^{(1)}$ and $\zeta^{(2)}$ are linearly dependent, while $\zeta^{(2)}$ and $\zeta^{(0)}$ are linearly independent.
\end{lemma}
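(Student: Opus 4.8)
The plan is to realize the three covectors inside a common two-dimensional subspace of $T_p^*M$, so that their linear dependence is automatic, and to exploit the strict inequality $c_P>c_S$ to place two \emph{independent} $S$-null directions and one $P$-null direction in that plane. Writing covectors as $(\tau,\xi)$ with $\xi\in\mathbb{R}^3$ and $|\xi|$ the Euclidean norm, recall that $(\tau,\xi)\in L^{S,*}_pM$ means $\tau^2=c_S^2|\xi|^2$ and $(\tau,\xi)\in L^{P,*}_pM$ means $\tau^2=c_P^2|\xi|^2$, with $c_{P/S}$ evaluated at the fixed point $p$.

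First I would normalize the time component and set $\tau=1$ for the two $S$-null covectors. Choose $\xi_0,\xi_2\in\mathbb{R}^3$ with $|\xi_0|=|\xi_2|=c_S^{-1}$, and let $\theta$ be the angle between them, so that $\zeta^{(0)}=(1,\xi_0)$ and $\zeta^{(2)}=(1,\xi_2)$ lie on $L^{S,*}_pM$ and are nonzero. I would then define $\zeta^{(1)}$ as the average
\[
\zeta^{(1)}=\tfrac12\left(\zeta^{(0)}+\zeta^{(2)}\right)=\left(1,\tfrac12(\xi_0+\xi_2)\right).
\]
This makes the relation $\zeta^{(0)}-2\zeta^{(1)}+\zeta^{(2)}=0$ hold, so the three covectors are linearly dependent, and $\zeta^{(1)}$ is nonzero since its time component equals $1$.

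The only condition left to enforce is $\zeta^{(1)}\in L^{P,*}_pM$, i.e. $1=c_P^2\,\tfrac14|\xi_0+\xi_2|^2$. Expanding gives $|\xi_0+\xi_2|^2=\tfrac{2}{c_S^2}(1+\cos\theta)$, so the requirement becomes
\[
\cos\theta=\frac{2c_S^2}{c_P^2}-1.
\]
Because $0<c_S<c_P$ at $p$, the right-hand side lies strictly inside $(-1,1)$, so such an angle $\theta\in(0,\pi)$ exists, and I would fix $\xi_0,\xi_2$ accordingly. The same strict inequality forces $\theta\neq0,\pi$, hence $\xi_0$ and $\xi_2$ are not parallel; since $\zeta^{(0)},\zeta^{(2)}$ share the time component $1$, this makes them linearly independent. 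All the required properties then hold.

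There is no serious obstacle here: the construction is explicit, and the single nontrivial input is the strict speed separation $c_P>c_S$, which is already part of the standing assumptions and which simultaneously yields the existence of the interaction angle and the independence of the two $S$-null directions. Physically, the relation $\zeta^{(0)}-2\zeta^{(1)}+\zeta^{(2)}=0$ encodes the momentum balance of a three-wave interaction coupling two $S$-waves to one $P$-wave, and the angle condition is precisely the kinematic constraint for such an interaction to be possible.
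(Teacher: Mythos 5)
Your proof is correct, and it hinges on the same single fact as the paper's proof---the strict inequality $c_S<c_P$ at $p$---but it is organized in the opposite direction. The paper fixes \emph{arbitrary} unit directions $\xi^{(1)},\xi^{(2)}$, sets $\zeta^{(1)}=(c_P,\xi^{(1)})\in L^{P,*}_pM$ and $\zeta^{(2)}=(c_S,\xi^{(2)})\in L^{S,*}_pM$, and seeks $\zeta^{(0)}=\zeta^{(1)}+b\zeta^{(2)}$ on the $S$-cone; the null condition reduces to a scalar equation, linear in $b$, whose coefficient of $b$, namely $2\bigl(\sqrt{\mu(\lambda+2\mu)}-\mu\,\xi^{(1)}\cdot\xi^{(2)}\bigr)$, is positive precisely because $c_Pc_S>c_S^2\geq c_S^2\,\xi^{(1)}\cdot\xi^{(2)}$, so a nonzero root $b$ exists for \emph{every} choice of the two directions (the paper's constant term is written $\lambda+2\mu$ where it should be $\lambda+\mu$, a harmless slip). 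You instead freeze the coefficients of the dependence relation, $\zeta^{(0)}-2\zeta^{(1)}+\zeta^{(2)}=0$, and solve for the angle, obtaining the unique admissible $\theta$ with $\cos\theta=2c_S^2/c_P^2-1$. Both arguments are complete proofs of the lemma, but they buy different things. In the paper's family the angle between $\xi^{(1)}$ and $\xi^{(2)}$ is a free parameter, and this freedom is exactly what Section 4 exploits (``by varying $\xi^{(1)},\xi^{(2)}$'') to split the recovered quantity $(\lambda+\mathscr{B})\,\xi^{(2)}\cdot\xi^{(0)}+(2\mu+\tfrac{\mathscr{A}}{2})(\xi^{(1)}\cdot\xi^{(2)})(\xi^{(1)}\cdot\xi^{(0)})$ into its two coefficients. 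Your configuration is rigid up to a global rotation---the two $S$-directions meet at the fixed angle $\theta$ and $\xi^{(1)}$ bisects them---so all the inner products above are fixed numbers and only one fixed linear combination of $\lambda+\mathscr{B}$ and $2\mu+\tfrac{\mathscr{A}}{2}$ could be read off from it. Thus your argument establishes the lemma as stated, but if it is meant to feed the determination of $\mathscr{A}$ and $\mathscr{B}$, you should desymmetrize the ansatz (e.g.\ take $\zeta^{(0)}=\zeta^{(1)}+b\zeta^{(2)}$ with $b$ the unknown, as the paper does) so that the interaction angles can be varied.
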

For readers' convenience, we still include the proof here.
\begin{proof}
Assume $\zeta^{(k)}=(\tau^{(k)},\xi^{(k)})$, $k=1,2$. We have
\[
(\tau^{(1)})^2=c_P^2|\xi^{(1)}|^2,\quad (\tau^{(2)})^2=c_S^2|\xi^{(2)}|^2.
\]
Now we consider the vector $\zeta^{(0)}=a\zeta^{(1)}+b\zeta^{(2)}$. Without loss of generality, we can assume $a=1$, $|\xi^{(k)}|=1$ for $k=1,2$. In order for $\zeta^{(0)}\in L^{S,*}_pM$, we need
\[
(\tau^{(1)}+b\tau^{(2)})^2=c_S^2|\xi^{(1)}+b\xi^{(2)}|^2.
\]
Then we must have
\[
2b\sqrt{\mu(\lambda+2\mu)}-2\mu b\xi^{(1)}\cdot\xi^{(2)}+(\lambda+2\mu)=0.
\]
The above equation (with $b$ as the unknown) always has a nonzero solution. This finishes the proof of the lemma.

\end{proof}

Fix a point $x_0\in\Omega$. Let $p=(\frac{T}{2},x_0)\in M$, $\xi^{(0)},\xi^{(1)},\xi^{(2)}\in T_{x_0}^*\Omega$, $|\xi^{(k)}|=1$. By Lemma \ref{xi_cc}, we can choose $\xi^{(k)}$, $k=0,1,2$, such that
\[
\begin{split}
\zeta^{(1)}=(c_P,\xi^{(1)})\in L^{P,*}_pM,\\
\zeta^{(2)}= (c_S,\xi^{(2)})\in L^{S,*}_pM,\\
\zeta^{(0)}=  (c_S,\xi^{(0)})\in L^{S,*}_pM,
\end{split}
\]
satisfying
\begin{equation}\label{phase_sum}
\kappa_0\zeta^{(0)}+\kappa_1\zeta^{(1)}+\kappa_2\zeta^{(2)}=0.
\end{equation}

\begin{remark}
The only term with parameter $\mathscr{C}$ in \eqref{integrand_G} is $\mathscr{C} (\nabla\cdot u^{(1)})(\nabla\cdot u^{(2)})(\nabla\cdot v)$. If any of the three solutions $u^{(1)},u^{(2)}, v$ represents \textit{S}-wave, this term will essentially vanish. Thus for the recovery of $\mathscr{C}$, we need to take $u^{(1)},u^{(2)},v$ all representing \textit{P}-waves. However, one can not choose three vectors in  $L^{P,*}_pM$, such that they are linearly dependent but pairwise linearly independent. This explains why we need to recover $\mathscr{C}$ in a different way.
\end{remark}

Denote $\vartheta^{(0)}, \vartheta^{(2)}$ to be the null geodesics in Lorentzian manifold $(M,-\mathrm{d}t^2+g_S)$ with cotangent vector $\zeta^{(0)}$, $\zeta^{(2)}$ at point $p$, and $\vartheta^{(1)}$ the null geodesic in Lorentzian manifold $(M,-\mathrm{d}t^2+g_P)$ with cotangent vector $\zeta^{(1)}$ at point $p$. We will construct solutions:
\begin{itemize}
\item $u^{(1),\,P}_\varrho$  is the Gaussian beam solution representing \textit{P}-waves, concentrated near the null geodesic $\vartheta^{(1)}$;
\item $u^{(2),\,SV}_\varrho$  is the Gaussian beam solution representing \textit{SV}-waves, concentrated near the null geodesic $\vartheta^{(2)}$;
\item $v^{SV}_\varrho$  is the Gaussian beam solution representing \textit{SV}-waves, concentrated near the null geodesic $\vartheta^{(0)}$.
\end{itemize}

\begin{figure}[htbp]
\centering
\includegraphics[width=2 in]{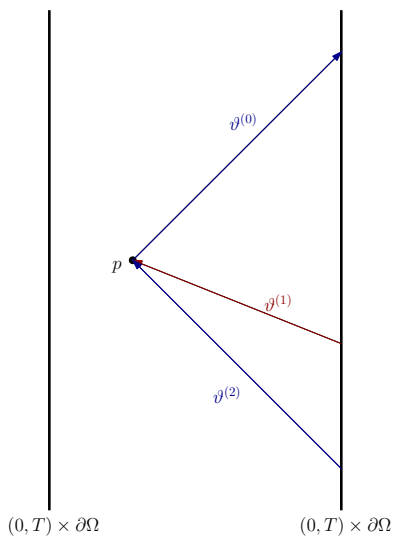}
\caption{Illustration of the choices of $\vartheta^{(1)},\vartheta^{(2)}$ and $\vartheta^{(0)}$.}
\end{figure}
More specifically, denote
\[
u^{(1),\,P}_\varrho=e^{\mathrm{i}\varrho\kappa_1\varphi^{(1)}}\chi^{(1)}(\mathbf{a}^{(1)}+\mathcal{O}(\varrho^{-1})),\quad u^{(2),\,SV}_\varrho=e^{\mathrm{i}\varrho\kappa_2\varphi^{(2)}}\chi^{(2)}(\mathbf{a}^{(2)}+\mathcal{O}(\varrho^{-1})),
\]
and
\[
v^{SV}_\varrho=e^{\mathrm{i}\varrho\kappa_0\varphi^{(0)}}\chi^{(0)}(\mathbf{a}^{(0)}+\mathcal{O}(\varrho^{-1})).
\]
By the construction of the phase functions $\varphi^{(k)}$, we can let
\[
\nabla\varphi^{(k)}(p)=\xi^{(k)},~\text{for}~k=0,1,2.
\]
The amplitudes can be chosen such that
\begin{equation}\label{choice_axi}
\mathbf{a}^{(1)}(p)=\xi^{(1)},\quad \mathbf{a}^{(2)}(p)=\mathbf{a}^{(0)}(p)\perp\text{span}\{\xi^{(1)},\xi^{(2)}\},\quad |\mathbf{a}^{(2)}(p)|=1.
\end{equation}
Similar to \cite[Lemma 5]{feizmohammadi2019recovery}, we have
 \begin{lemma}
 The function
 \[
 S:=\kappa_0\varphi^{(0)}+\kappa_1\varphi^{(1)}+\kappa_2\varphi^{(2)}
 \]
 is well-defined in a neighborhood of $p$ and
 \begin{enumerate}
 \item $S(p)=0$;
 \item $(\partial_t S(p),\nabla S(p))=0$;
 \item $\Im S(q)\geq cd(q,p)^2$ for $q$ in a neighborhood of $p$, where $c>0$ is a constant.
 \end{enumerate}
 \end{lemma}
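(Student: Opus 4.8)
The plan is to verify the three properties directly from the structure of the Gaussian beam phases built in Section \ref{gaussianbeam}. For \textbf{well-definedness}, note that each $\varphi^{(k)}$ is defined on a tube around $\vartheta^{(k)}$, namely where the Fermi coordinates centered at $\vartheta^{(k)}$ are valid; since all three null geodesics pass through $p$, these tubes overlap in a full spacetime neighborhood of $p$, on which $S$ is defined. For property (1), recall that $\varphi^{(k)}=\sum_{j=0}^{N}\varphi^{(k)}_j$ with $\varphi^{(k)}_0=0$ and each $\varphi^{(k)}_j$, $j\geq 1$, a homogeneous polynomial of degree $j$ in the transverse Fermi variables $z'=(z^1,z^2,z^3)$. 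Along $\vartheta^{(k)}$ one has $z'=0$, so $\varphi^{(k)}$ vanishes on $\vartheta^{(k)}$; in particular $\varphi^{(k)}(p)=0$ because $p\in\vartheta^{(k)}$, whence $S(p)=0$.

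For property (2) I would use the eikonal equation together with the normalization $\nabla\varphi^{(k)}(p)=\xi^{(k)}$. Since $\mathcal{S}\varphi^{(k)}=0$ holds on $\vartheta^{(k)}$, at $p$ we get $(\partial_t\varphi^{(k)}(p))^2=c^2|\nabla\varphi^{(k)}(p)|^2=(\tau^{(k)})^2$, where $c=c_P$ for $k=1$ and $c=c_S$ for $k=0,2$; orienting $\vartheta^{(k)}$ so that $\partial_t\varphi^{(k)}(p)=\tau^{(k)}$ gives $d\varphi^{(k)}(p)=(\partial_t\varphi^{(k)}(p),\nabla\varphi^{(k)}(p))=\zeta^{(k)}$. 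Summing and invoking \eqref{phase_sum},
\[
dS(p)=\kappa_0\zeta^{(0)}+\kappa_1\zeta^{(1)}+\kappa_2\zeta^{(2)}=0,
\]
which is exactly statement (2).

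Property (3) is the heart of the matter. Because $\varphi^{(k)}(p)=0$ and $d\varphi^{(k)}(p)=\zeta^{(k)}$ is a \emph{real} covector, $\Im\varphi^{(k)}$ has a critical point at $p$ with value $0$, so
\[
\Im S(q)=\tfrac12\,\mathrm{Hess}(\Im S)(p)[w,w]+O(|w|^3),\qquad w=q-p,
\]
with $\mathrm{Hess}(\Im S)(p)=\sum_k\kappa_k\,\mathrm{Hess}(\Im\varphi^{(k)})(p)$. Since the leading quadratic term in the $k$-th Fermi coordinates is $\varphi^{(k)}_2=\sum_{i,j}H^{(k)}_{ij}z^iz^j$, the Hessian of $\Im\varphi^{(k)}$ at $p$ equals $2\,\Im H^{(k)}$ on the transverse block and annihilates $\partial_\tau$; intrinsically it is semidefinite with one-dimensional kernel $\mathrm{span}\{\dot\vartheta^{(k)}(p)\}$. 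I would fix the sign of $\Im H_0^{(k)}$ in the Ricatti equation \eqref{Ricatti} (both signs are admissible, as complex conjugation maps solutions to solutions) so that $\kappa_k\,\Im H^{(k)}>0$; this is consistent with—indeed forced by—the requirement that each beam $e^{\mathrm i\varrho\kappa_k\varphi^{(k)}}$ concentrate near $\vartheta^{(k)}$. With this choice every $\kappa_k\,\mathrm{Hess}(\Im\varphi^{(k)})(p)$ is positive semidefinite with kernel $\mathrm{span}\{\dot\vartheta^{(k)}(p)\}$, so that
\[
\ker\mathrm{Hess}(\Im S)(p)=\bigcap_{k=0,1,2}\mathrm{span}\{\dot\vartheta^{(k)}(p)\}.
\]
By Lemma \ref{xi_cc} the covectors $\zeta^{(0)},\zeta^{(2)}$ are linearly independent, hence so are the associated tangent vectors $\dot\vartheta^{(0)}(p),\dot\vartheta^{(2)}(p)$ (obtained by raising indices with the \emph{common} metric $-\mathrm dt^2+g_S$); thus the intersection is $\{0\}$ and $\mathrm{Hess}(\Im S)(p)$ is positive definite. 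Using the local comparability of $d(q,p)$ with $|q-p|$, this yields $\Im S(q)\geq c\,d(q,p)^2$.

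The \textbf{main obstacle} is the final step, positive definiteness of the total Hessian. It rests on two points that must be handled with care: (i) the bookkeeping of the signs of the $\kappa_k$ (which are mixed) against the signs of $\Im H^{(k)}$, so that all summands are semidefinite of the \emph{same} sign; and (ii) the transversality of the two $S$-wave null directions at $p$, which is precisely the content of Lemma \ref{xi_cc}. Everything else reduces to a routine second-order Taylor expansion.
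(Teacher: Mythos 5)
Your proof is correct and follows essentially the same route as the paper's: claims (1) and (2) are handled identically (vanishing of each $\varphi^{(k)}$ along $\vartheta^{(k)}$, then the eikonal normalization $d\varphi^{(k)}(p)=\zeta^{(k)}$ combined with \eqref{phase_sum}), and claim (3) is in both cases reduced to the positive semidefiniteness of the transverse Hessians $\Im H^{(k)}$ together with the fact that the kernels along the two $S$-wave null directions intersect trivially because $\zeta^{(0)},\zeta^{(2)}$ are linearly independent by Lemma \ref{xi_cc}. If anything, your write-up is more careful than the paper's: the paper simply asserts $\Im\varphi^{(1)}\geq 0$ and $\Im\varphi^{(0)}+\Im\varphi^{(2)}\geq c\,d(p,q)^2$ without addressing the mixed signs of the $\kappa_k$ (they cannot all be positive, since the time components of all three $\zeta^{(k)}$ are positive), whereas your explicit choice of the sign of $\Im H_0^{(k)}$ so that $\kappa_k\,\Im H^{(k)}>0$ makes exactly this step rigorous.
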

 
 \begin{proof}
 The first claim is trivial since each of the three phases $\phi^{(k)}$ vanishes along $\vartheta^{(k)}$. For the second claim, one only needs to notice that 
 \[
 (\partial_t\varphi^{(0)},\nabla \varphi^{(0)})=(c_S,\xi^{(0)}),\quad  (\partial_t\varphi^{(1)},\nabla \varphi^{(1)})=(c_P,\xi^{(1)}),\quad(\partial_t\varphi^{(2)},\nabla \varphi^{(2)})=(c_S,\xi^{(2)})
 \]
 and use \eqref{phase_sum}.
 
 For the third claim, first notice $\Im\varphi^{(1)}(q)\geq 0$. We will prove $\Im\varphi^{(0)}(q)+\Im\varphi^{(2)}(q)\geq cd(p,q)$ for some constant $c>0$. Using Fermi coodinates for $(M,-\mathrm{d}t^2+g_S)$, we see that for $k=0,2$,
 \[
 \begin{split}
& D^2\Im\varphi^{(k)}(X,X)\geq 0,\quad\forall X\in T_pM,\\
&  D^2\Im\varphi^{(k)}(X,X)>0,\quad\forall X\in T_pM\setminus\text{span}(\xi^{(k),\sharp}).
 \end{split}
 \]
 Since $\xi^{(0)}$ and $\xi^{(2)}$ are linearly independent, the claim follows.
 \end{proof} 
 
 Now let $u^{(k)}$, $k=1,2$ to be the solution to \eqref{elastic_eq_linearized} with
 \[
 f^{(1)}=u^{(1),\,P}_\varrho\vert_{[0,T]\times\partial \Omega},\quad  f^{(2)}=u^{(2),\,SV}_\varrho\vert_{[0,T]\times\partial \Omega}
 \]
 and $v$ to be the solution to \eqref{backward_eq} with
  \[
 g=v^{SV}_\varrho\vert_{[0,T]\times\partial\Omega}.
 \]
 We remark here that $u^{(k),\,\bullet}_\varrho\vert_{t=0}=\frac{\partial}{\partial t}u_\varrho^{(k),\,\bullet}\vert_{t=0}=0$, and $v^{SV}_\varrho\vert_{t=T}=\frac{\partial}{\partial t}v^{SV}_\varrho\vert_{t=T}=0$ since $T>2\,\mathrm{diam}_S(\Omega)$.
 
 We will need the estimate \eqref{R_estimate} and the following ones
 \[
 \|u_{\varrho}^{(1),P}\|_{W^{1,3}}, \quad\|u_{\varrho}^{(2),SV}\|_{W^{1,3}}, \quad\|v_{\varrho}^{SV}\|_{W^{1,3}}=\mathcal{O}(\varrho^{1/2}).
 \]
Substituting $f^{(1)},f^{(2)},g$ constructed above in \eqref{integral_G}, we know that the displacement-to-traction map determines
\begin{equation}\label{integral}
\begin{split}
&\varrho^{-1}\frac{\mathrm{i}}{\kappa_1\kappa_2\kappa_3}\int_0^T\int_{\Omega}\mathcal{G}(\nabla u^{(1)},\nabla u^{(2)},\nabla v)\mathrm{d}x\mathrm{d}t\\
=&\varrho^{2}\int_Qe^{\mathrm{i}\varrho S}\chi^{(1)}\chi^{(2)}\chi^{(0)}\mathcal{G}(\mathbf{a}^{(1)}\otimes\nabla\varphi^{(1)},\mathbf{a}^{(2)}\otimes\nabla\varphi^{(2)},\mathbf{a}^{(0)}\otimes\nabla\varphi^{(0)})\mathrm{d}x\mathrm{d}t+\mathcal{O}(\varrho^{-1/2}).
\end{split}
\end{equation}

\textbf{First let us assume there are no conjugate points in $(\Omega,g_P)$. }Then the three null-geodesic $\vartheta^{(1)},\vartheta^{(2)},\vartheta^{(0)}$ intersect only at $p$, and thus the function $\chi^{(1)}\chi^{(2)}\chi^{(0)}$ is supported in a small neighborhood of $p$. Denote
\[
\mathcal{A}:=\mathcal{G}(\mathbf{a}^{(1)}\otimes\nabla\varphi^{(1)},\mathbf{a}^{(2)}\otimes\nabla\varphi^{(2)},\mathbf{a}^{(0)}\otimes\nabla\varphi^{(0)})
\]
with
\[
\begin{split}
&\mathcal{G}(\mathbf{a}^{(1)}\otimes\nabla\varphi^{(1)},\mathbf{a}^{(2)}\otimes\nabla\varphi^{(2)},\mathbf{a}^{(0)}\otimes\nabla\varphi^{(0)})\\
=&\mathscr{B}[(\mathbf{a}^{(1)}\cdot\nabla\varphi^{(1)})(\mathbf{a}^{(2)}\cdot\nabla\varphi^{(0)})(\mathbf{a}^{(0)}\cdot\nabla\varphi^{(2)})+(\mathbf{a}^{(2)}\cdot\nabla\varphi^{(2)})(\mathbf{a}^{(1)}\cdot\nabla\varphi^{(0)})(\mathbf{a}^{(0)}\cdot\nabla\varphi^{(2)})\\
&\quad\quad\quad\quad+ (\mathbf{a}^{(2)}\cdot\nabla\varphi^{(0)})(\mathbf{a}^{(1)}\cdot\nabla\varphi^{(0)})(\mathbf{a}^{(0)}\cdot\nabla\varphi^{(2)})]\\
&+\frac{\mathscr{A}}{4}\left((\mathbf{a}^{(2)}\cdot\nabla\varphi^{(1)})(\mathbf{a}^{(1)}\cdot\nabla\varphi^{(0)})(\mathbf{a}^{(0)}\cdot\nabla\varphi^{(2)})+(\mathbf{a}^{(1)}\cdot\nabla\varphi^{(2)})(\mathbf{a}^{(2)}\cdot\nabla\varphi^{(0)})(\mathbf{a}^{(0)}\cdot\nabla\varphi^{(1)})\right)\\
&+(\lambda+\mathscr{B})[(\mathbf{a}^{(1)}\cdot\nabla\varphi^{(1)})(\mathbf{a}^{(2)}\cdot\mathbf{a}^{(0)})(\nabla\varphi^{(2)}\cdot\nabla\varphi^{(0)})+(\mathbf{a}^{(2)}\cdot\nabla\varphi^{(2)})(\mathbf{a}^{(1)}\cdot\mathbf{a}^{(0)})(\nabla\varphi^{(1)}\cdot\nabla\varphi^{(0)})\\
&\quad\quad+(\mathbf{a}^{(1)}\cdot\mathbf{a}^{(2)})(\nabla\varphi^{(1)}\cdot\nabla\varphi^{(0)})(\mathbf{a}^{(0)}\cdot\nabla\varphi^{(0)})]+2\mathscr{C}(\mathbf{a}^{(1)}\cdot\nabla\varphi^{(1)})(\mathbf{a}^{(2)}\cdot\nabla\varphi^{(2)})(\mathbf{a}^{(0)}\cdot\nabla\varphi^{(0)})\\
&+(\mu+\frac{\mathscr{A}}{4})\Big((\mathbf{a}^{(1)}\cdot\mathbf{a}^{(2)})(\nabla\varphi^{(1)}\cdot\mathbf{a}^{(0)})(\nabla\varphi^{(2)}\cdot\nabla\varphi^{(0)})+(\nabla\varphi^{(1)}\cdot\nabla\varphi^{(2)})(\mathbf{a}^{(1)}\cdot\mathbf{a}^{(0)})(\mathbf{a}^{(2)}\cdot\nabla\varphi^{(0)})\\
&\quad \quad+(\mathbf{a}^{(2)}\cdot\mathbf{a}^{(1)})(\nabla\varphi^{(2)}\cdot\mathbf{a}^{(0)})(\nabla\varphi^{(1)}\cdot\nabla\varphi^{(0)})+(\mathbf{a}^{(2)}\cdot\mathbf{a}^{(0)})(\mathbf{a}^{(1)}\cdot\nabla\varphi^{(0)})(\nabla\varphi^{(1)}\cdot\nabla\varphi^{(2)})\\
&\quad\quad+(\nabla\varphi^{(1)}\cdot\mathbf{a}^{(2)})(\nabla\varphi^{(2)}\cdot\nabla\varphi^{(0)})(\mathbf{a}^{(1)}\cdot\mathbf{a}^{(0)})+(\nabla\varphi^{(2)}\cdot\mathbf{a}^{(1)})(\nabla\varphi^{(1)}\cdot\nabla\varphi^{(0)})(\mathbf{a}^{(2)}\cdot\mathbf{a}^{(0)})\Big).
\end{split}
\]
By the choice of $\alpha^{(k)}=\mathbf{a}^{(k)}(p)$ and $\xi^{(k)}$, $k=0,1,2$ in \eqref{choice_axi}. we have
\[
\begin{split}
\mathcal{A}(p)=&\mathscr{B}(x_0)[(\alpha^{(1)}\cdot\xi^{(1)})(\alpha^{(2)}\cdot\xi^{(0)})(\alpha^{(0)}\cdot\xi^{(2)})+(\alpha^{(2)}\cdot\xi^{(2)})(\alpha^{(1)}\cdot\xi^{(0)})(\alpha^{(0)}\cdot\xi^{(2)})\\
&\quad\quad\quad\quad+ (\alpha^{(2)}\cdot\xi^{(0)})(\alpha^{(1)}\cdot\xi^{(0)})(\alpha^{(0)}\cdot\xi^{(2)})]\\
&+\frac{\mathscr{A}}{4}(x_0)\left((\alpha^{(2)}\cdot\xi^{(1)})(\alpha^{(1)}\cdot\xi^{(0)})(\alpha^{(0)}\cdot\xi^{(2)})+(\alpha^{(1)}\cdot\xi^{(2)})(\alpha^{(2)}\cdot\xi^{(0)})(\alpha^{(0)}\cdot\xi^{(1)})\right)\\
&+(\lambda+\mathscr{B})(x_0)[(\alpha^{(1)}\cdot\xi^{(1)})(\alpha^{(2)}\cdot\alpha^{(0)})(\xi^{(2)}\cdot\xi^{(0)})+(\alpha^{(2)}\cdot\xi^{(2)})(\alpha^{(1)}\cdot\alpha^{(0)})(\xi^{(1)}\cdot\xi^{(0)})\\
&\quad\quad+(\alpha^{(1)}\cdot\alpha^{(2)})(\xi^{(1)}\cdot\xi^{(0)})(\alpha^{(0)}\cdot\xi^{(0)})]+2\mathscr{C}(x_0)(\alpha^{(1)}\cdot\xi^{(1)})(\alpha^{(2)}\cdot\xi^{(2)})(\alpha^{(0)}\cdot\xi^{(0)})\\
&+(\mu+\frac{\mathscr{A}}{4})(x_0)\Big((\alpha^{(1)}\cdot\alpha^{(2)})(\xi^{(1)}\cdot\alpha^{(0)})(\xi^{(2)}\cdot\xi^{(0)})+(\xi^{(1)}\cdot\xi^{(2)})(\alpha^{(1)}\cdot\alpha^{(0)})(\alpha^{(2)}\cdot\xi^{(0)})\\
&\quad \quad+(\alpha^{(2)}\cdot\alpha^{(1)})(\xi^{(2)}\cdot\alpha^{(0)})(\xi^{(1)}\cdot\xi^{(0)})+(\alpha^{(2)}\cdot\alpha^{(0)})(\alpha^{(1)}\cdot\xi^{(0)})(\xi^{(1)}\cdot\xi^{(2)})\\
&\quad\quad+(\xi^{(1)}\cdot\alpha^{(2)})(\xi^{(2)}\cdot\xi^{(0)})(\alpha^{(1)}\cdot\alpha^{(0)})+(\xi^{(2)}\cdot\alpha^{(1)})(\xi^{(1)}\cdot\xi^{(0)})(\alpha^{(2)}\cdot\alpha^{(0)})\Big)\\
=& (\lambda+\mathscr{B})(x_0)\xi^{(2)}\cdot\xi^{(0)}+(2\mu+\frac{\mathscr{A}}{2})(x_0)(\xi^{(1)}\cdot \xi^{(2)})(\xi^{(1)}\cdot \xi^{(0)}).
\end{split}
\]

Apply the method of stationary phase (cf., for example, \cite[Theorem 7.7.5]{hormander2015analysis}) to \eqref{integral}, we can recover
\[
(\lambda+\mathscr{B})(x_0)\xi^{(2)}\cdot\xi^{(0)}+(2\mu+\frac{\mathscr{A}}{2})(x_0)(\xi^{(1)}\cdot \xi^{(2)})(\xi^{(1)}\cdot \xi^{(0)}),
\]
by taking $\varrho\rightarrow+\infty$.
This is exactly the same quantity recovered in \cite{de2018nonlinear} using the nonlinear interaction of distorted plane \textit{P} and \textit{SV} waves.
By varying $\xi^{(1)},\xi^{(2)}$ ($\xi^{(0)}$ will be varying accordingly), we can recover $\lambda+\mathscr{B}$ and $2\mu+\frac{\mathscr{A}}{2}$ separately at the point $x_0$. Since $x_0$ can be any point in $\Omega$, this completes the determination of $\mathscr{A}$ and $\mathscr{B}$ in $\Omega$.\\

\begin{remark}
We only used the nonlinear interaction of \textit{P} and \textit{SV} waves to determine $\mathscr{A}$ and $\mathscr{B}$. It has already been observed to be possible in \cite{de2018nonlinear}, wherein nonlinear interactions of other types are analyzed as well.
\end{remark}

\textbf{Now assume $(\Omega,g_P)$ satisfies the foliation condition.} As in \cite{uhlmann2016inverse}, for any point $q\in\partial\Omega$, there exists a wedge-shaped neighborhood $O_q\subset \Omega$ of $q$ such that any geodesic in $(O_q,g_P)$ has no conjugate points. Then the three null-geodesics $\vartheta^{(1)},\vartheta^{(2)},\vartheta^{(0)}$, if $\vartheta^{(1)}\subset ((0,T)\times O_q,-\mathrm{d}t^2+g_P)$, can intersect only at $p$ (since $c_P>c_S$). We can now recover $\mathscr{A}$ and $\mathscr{B}$ in $O_q$. Then the foliation condition allows a layer stripping scheme to recover the two parameters in the whole domain $\Omega$. For more details, we refer to \cite{uhlmann2016inverse}.
\subsection{Determination of $\mathscr{C}$}
Finally, we recover the parameter $\mathscr{C}$. 

Since $\mathscr{A}$ and $\mathscr{B}$ have already been determined, the displacement-to-traction map now gives
\begin{equation}\label{integral_identity}
\int_0^T\int_{\Omega}\mathscr{C} (\nabla\cdot u^{(1)})(\nabla\cdot u^{(2)})(\nabla\cdot v)\mathrm{d}x\mathrm{d}t.
\end{equation}
We will construct $u^{(1)},u^{(2)}$ and $v$ so that they all represent \textit{P}-waves. Still use Gaussian beam solutions, now concentrating near the same null geodesic $\vartheta$ in $(M,-\mathrm{d}t^2+ g_P)$. Let
\[
u^{(1)}_\varrho=u^{(2)}_\varrho=e^{\mathrm{i}\varrho\varphi}\chi(\frac{|z'|}{\delta})(\mathbf{a}+\mathcal{O}(\varrho^{-1})),
\]
\[
v_\varrho=e^{-2\mathrm{i}\varrho\overline{\varphi}}\chi(\frac{|z'|}{\delta})(\overline{\mathbf{a}}+\mathcal{O}(\varrho^{-1})).
\]
%Here $R_\varrho$ and $R^T_{2\varrho}$ solve forward and backward linear elastic wave equations respectively. That is
%\[
%\begin{split}
%&\frac{\partial^2R_\varrho}{\partial t^2}-\nabla\cdot S^L(x,R_\varrho)=-F_\varrho,\quad (t,x)\in (0,T)\times\Omega,\\
%&R_\varrho=0,\quad \text{on } (0,T)\times\partial \Omega,\\
%&R_\varrho(0,x)=\frac{\partial}{\partial t}R_\varrho(0,x)=0,\quad x\in \Omega.
%\end{split}
%\]
%and
%\[
%\begin{split}
%&\frac{\partial^2R^T_{2\varrho}}{\partial t^2}-\nabla\cdot S^L(x,R^T_{2\varrho})=-F_{2\varrho},\quad (t,x)\in (0,T)\times\Omega,\\
%&R^T_{2\varrho}=0,\quad \text{on } (0,T)\times\partial \Omega,\\
%&R^T_{2\varrho}(T,x)=\frac{\partial}{\partial t}R^T_{2\varrho}(T,x)=0,\quad x\in \Omega.
%\end{split}
%\]
let $u^{(k)}$, $k=1,2$ to be the solution to \eqref{elastic_eq_linearized} with
 \[
 f^{(1)}=u^{(1)}_\varrho\vert_{[0,T]\times\partial \Omega},\quad  f^{(2)}=u^{(2)}_\varrho\vert_{[0,T]\times\partial \Omega}
 \]
 and $v$ to be the solution to \eqref{backward_eq} with
  \[
 g=v_\varrho\vert_{[0,T]\times\partial \Omega}.
 \]

We extend $\mathscr{C}$ to $\widetilde{\Omega}$ such that $\mathscr{C}=0$ in $\widetilde{\Omega}\setminus \Omega$. Then the displacement-to-traction map determines
\[
\begin{split}
&\frac{1}{2\mathrm{i}}\varrho^{-3/2}\int_M \mathscr{C}(\nabla\cdot u^{(1)})(\nabla\cdot u^{(2)})(\nabla\cdot v)\mathrm{d}V\\
=&\varrho^{3/2}\int_{\tau_--\frac{\epsilon}{\sqrt{2}}}^{\tau_++\frac{\epsilon}{\sqrt{2}}}\int_{|z'|<\delta}\mathscr{C} e^{-4\varrho\Im\varphi}\chi^3(\frac{|z'|}{\delta})(\nabla\varphi\cdot \mathbf{a})(\nabla\varphi\cdot  \mathbf{a})(\overline{\nabla\varphi\cdot  \mathbf{a}})c_P^{3}\mathrm{d}z'\wedge\mathrm{d}\tau+\mathcal{O}(\varrho^{-1}).
\end{split}
\]
with $\delta$ sufficiently small.
Notice
\[
%\begin{split}
(\nabla\varphi\cdot \mathbf{a})(\nabla\varphi\cdot  \mathbf{a})(\overline{\nabla\varphi\cdot  \mathbf{a}})\vert_{\vartheta(\tau)}=c|\det Y(\tau)|^{-1}(\det Y(\tau))^{-1/2}c_P^{-15/2}(\tau)\rho^{-3/2}(\tau),
%=&\varrho^{3/2}\chi^3\left(\frac{|z'|}{\delta}\right)|A_P|^2A_P|\nabla\varphi|^3.
%\end{split}
\]
where $c$ is some constant.
%On the geodesic $\beta(\tau)$,
%\[
%|A_P|^2A_P|\nabla\varphi|^3\Big\vert_{\gamma(\tau)}=(\det |Y(\tau)|)^{-1}(\det Y(\tau))^{-1/2}c_P^{-15/2}(\gamma(\tau))
%\]
Thus, using method of stationary phase and Lemma \ref{lemma_H0}, we have
\[
\begin{split}
&\lim_{\varrho\rightarrow+\infty}\varrho^{3/2}\int_{|z'|<\delta} \mathscr{C}e^{-4\varrho\Im\varphi}\chi^3(\frac{|z'|}{\delta})(\nabla\varphi\cdot \mathbf{a})(\nabla\varphi\cdot  \mathbf{a})(\overline{\nabla\varphi\cdot  \mathbf{a}})c_P^{3}\mathrm{d}z'\\
=&c\mathscr{C}(\tau,0)c_P^{-9/2}(\tau,0)\rho(\tau,0)^{-3/2}(\det Y(\tau))^{-1/2}.
\end{split}
\]
Thus we can recover
\[
%\begin{split}
%&\lim_{\varrho\rightarrow+\infty}\int_{\tau-}^{\tau+}\int_{|z|<\delta}C\varrho^{3/2}e^{-2\varrho\Im{\varphi}}|A_P|^2A_P|\nabla\varphi|^3c_P^3\chi^3\left(\frac{|z|}{\delta}\right)\mathrm{d}z\mathrm{d}\tau\\
\int_{\vartheta}\mathscr{C}c_P^{-9/2}(\tau,0)\rho^{-3/2}(\tau,0)(\det Y(\tau))^{-1/2}\mathrm{d}\tau.
%\end{split}
\]

Remember that $Y$ solves the equation \eqref{eq_Y}. By \cite[Corollary 3.5]{feizmohammadi2019timedependent},
\[
\frac{\partial^2\overline{g}^{11}}{\partial z^1\partial z^j}\Big\vert_\vartheta=0.
\]
 Thus one can take $Y_{11}=c_0$, $Y_{1j}=Y_{j1}=0$.  Here $c_0>0$ is independent of $\tau$. Denote $\widetilde{Y}=(Y_{\alpha\beta})_{\alpha,\beta=2}^3$. Then the $2\times 2$ matrix $\widetilde{Y}$ satisfies
 \[
 \frac{\mathrm{d}^2}{\mathrm{d}\tau^2}\widetilde{Y}+\widetilde{D}\widetilde{Y}=0,\quad \widetilde{Y}(a)=\widetilde{Y}_0,\quad \frac{\mathrm{d}}{\mathrm{d}\tau}\widetilde{Y}(a)=\widetilde{Y}_1,
 \]
 where $\widetilde{D}_{\alpha\beta}=\frac{1}{2}\left(\partial_{\alpha\beta}^2g^{11}\vert_\vartheta\right)_{\alpha,\beta=2}^3$.\\

 Now let us use some notations and definitions introduced in \cite{dahl2009geometrization}. We will follow the lines in \cite{feizmohammadi2019inverse}. Assume $\vartheta(t)=(t,\gamma(t))$ where $\gamma$ is a geodesic in the Riemannian manifold $(\Omega,g_P)$. Denote
 \[
 \dot{\gamma}(t)^\perp:=\{v\in T_{\gamma(t)}\Omega\vert g_P(\dot{\gamma}(t),v)=0\}
 \]
 to be the orthogonal complement at the point $\gamma(t)$ of $\dot{\gamma}$. Define the $(1,1)$-tensor $\Pi_{\gamma(t)}$ to be the projection from $T_{\gamma(t)}M$ onto $ \dot{\gamma}(t)^\perp$. A $(1,1)$-tensor $L(t)$ is said  to be transversal if $\Pi_{\gamma(t)} L(t)\Pi_{\gamma(t)}=L(t)$. Denote $\mathbb{Y}_\gamma$ to be the set of all transversal $(1,1)$-tensors $Y(t)$ that solve the complex Jacobi equation
\[
\frac{\mathrm{d}^2}{\mathrm{d}t^2}Y(t)-K(t)Y(t)=0,
\]
 subject to the constraint that
 \[
 \text{$Y(t_0)$ is non-degenerate, $\dot{Y}(t_0)Y(t_0)^{-1}$ is symmetric and $\Im(\dot{Y}(t_0)Y(t_0)^{-1})>0$}.
\]
Here $K=K_j^i\frac{\partial}{\partial y^i}\otimes\mathrm{d}y^j$, $K_j^i=g^{ik}R_{kj}$, $R$ is the Ricci tensor.
 
As in \cite{feizmohammadi2019timedependent}, we now have the \textit{Jacobi weighted ray transform of the first kind} (cf. \cite{feizmohammadi2019inverse}) of $\mathscr{C}c_P^{-9/2}\rho^{-3/2}=:f$ along the geodesic $\gamma$ in $(\Omega,g_P)$ passing through $x_0=\gamma(t_0)$,
\[
\mathscr{J}_Y^{(1)}f=\int_{t_-}^{t_+}f(\gamma(t))(\det Y(t))^{-1/2}\mathrm{d}t
\]
for any $Y\in\mathbb{Y}_\gamma$.\\

By \cite[Proposition 3]{feizmohammadi2019inverse}, $\mathscr{J}_Y^{(1)}f$ uniquely determines $f(x_0)$ \textbf{if $(\Omega,g_P)$ has no conjugate points}. Therefore, we can recover $\mathscr{C}(x_0)$ since $c_P$ and $\rho$ are already known. Here we use only the ray transform along this single geodesic. It is possible since we have the integral of $f$ along the geodesic with a class of weights. \\

\textbf{If $(\Omega,g_P)$ satisfies the foliation condition}, we can adopt a layer stripping method as previously used. One can also directly use the the invertibility of weighted geodesic ray transform with a single weight established in \cite{paternain2019geodesic}.

\subsection*{Acknowledgements}
GU was partially supported by NSF, a Walker Professorship at UW and a Si-Yuan Professorship at IAS, HKUST. JZ acknowledges the great hospitality of MSRI, where part of this work is carried out during his visit.\bibliographystyle{abbrv}
\bibliography{biblio}

\end{document}